\documentclass[a4paper,11pt]{article}

\usepackage{xr}
\externaldocument{systemsQuadForms}
\usepackage{amssymb,amsmath,amsthm}
\usepackage{mathtools}
\usepackage{xspace}
\usepackage{multicol}
\usepackage{mathrsfs}
\usepackage{bigstrut}
\usepackage{bm}
\theoremstyle{plain}
\newtheorem{theorem}{Theorem}
\newtheorem{lemma}[theorem]{Lemma}
\newtheorem{proposition}[theorem]{Proposition}
\theoremstyle{definition}
\newtheorem{definition}[theorem]{Definition}
\numberwithin{theorem}{subsection}
\newenvironment{smallpmatrix}{\left(\begin{smallmatrix}}{\end{smallmatrix}\right)} 
\newcommand{\eqdef}{=}
\DeclarePairedDelimiter{\brax}{(}{)}
\DeclarePairedDelimiter{\sqbrax}{[}{]}
\DeclarePairedDelimiter{\setbrax}{\{}{\}}
\DeclarePairedDelimiter{\set}{\{}{\}}
\DeclarePairedDelimiter{\abs}{\lvert}{\rvert}

\DeclarePairedDelimiter{\normDoubleBar}{\lVert}{\rVert}
\newcommand{\open}[2]{\brax{#1 , #2}}
\newcommand{\clsd}[2]{\sqbrax{#1 , #2}}
\newcommand{\suchthat}{:}
\renewcommand{\vec}[1]{\bm{#1}}
\newcommand{\vecsuper}[2]{\vec{#1}^{(#2)}}
\newcommand{\tbbQ}[0]{$\mathbb{Q}$\xspace}
\newcommand{\bbC}[0]{\mathbb{C}}
\newcommand{\bbF}[0]{\mathbb{F}}
\newcommand{\bbN}[0]{\mathbb{N}}
\newcommand{\bbP}[0]{\mathbb{P}}
\newcommand{\bbQ}[0]{\mathbb{Q}}
\newcommand{\bbR}[0]{\mathbb{R}}
\newcommand{\bbZ}[0]{\mathbb{Z}}
\newcommand{\frakS}[0]{\mathfrak{S}}
\newcommand{\frakI}[0]{\mathfrak{I}}
\newcommand{\singSeriesOf}[1]{\frakS_{#1}}
\newcommand{\singIntegralOf}[1]{\frakI_{#1}}
\newcommand{\weightbox}{\mathscr{B}}
\newcommand{\cancellation}{{\mathscr C}}
\newcommand{\rank}{\operatorname{rank}}
\newcommand{\sing}{\operatorname{Sing}}
\newcommand{\supnorm}[1]{\normDoubleBar{#1}_\infty}
\newcommand{\supnormbigg}[1]{\normDoubleBar[\bigg]{#1}_\infty}
\newcommand{\aDotCapitalF}{\vec{a}\cdot\vec{F}}
\newcommand{\betaDotLittleF}{\vec{\beta}\cdot\vec{f}}
\newcommand{\leadingPart}{\vec{f}^{[d]}}
\newcommand{\gradFMultilinear}[2]{\vec{m}^{(#1 \cdot \vec{f})} \brax{ #2 }}
\newcommand{\gradSomethingMultilinear}[2]{\vec{m}^{(#1)} \brax{ #2 }}
\newcommand{\fMultilinJacobian}[1]{J_{\vec{m}}^{\brax{f}}\brax{#1}}
\newcommand{\betaFMultilinJacobian}[1]{J_{\vec{m}}^{\brax{\vec{\beta}\cdot\vec{f}}}\brax{#1}}
\newcommand{\somethingDotFMultilinJacobian}[2]{J_{\vec{m}}^{\brax{#1\cdot\vec{f}}}\brax{#2}}
\newcommand{\HMultilinJacobian}[1]{J_{\vec{m}}^{\brax{H}}\brax{#1}}
\newcommand{\betaDotHMultilinJacobian}[1]{J_{\vec{m}}^{\brax{\vec{\beta}\cdot\vec{H}}}\brax{#1}}
\newcommand{\numZeroesInBoxOf}[1]{N_{#1,\weightbox}}
\newcommand{\auxIneqOfSomethingNumSolns}[1]{N^{\operatorname{aux}}_{#1}}
\newcommand{\auxIneqNumSolns}{N^{\operatorname{aux}}_{\vec{\beta}\cdot\vec{f}}}
\newcommand{\worstTangentSpaceDimCapitalH}{\sigma^\ast(\vec{H})}
\newcommand{\worstTangentSpaceDimCapitalF}{\sigma^\ast(\vec{F})}
\newcommand{\worstTangentSpaceDimLeadingPart}{\sigma^\ast(\leadingPart)}

\bibliographystyle{abbrv}

\begin{document}

				\title{Systems of forms in many variables}
				\author{S. L. Rydin Myerson}
				\maketitle

\begin{abstract}
	We consider systems $\vec{F}(\vec{x})$ of $R$ homogeneous forms  of the same degree $d$ in $n$ variables with integral coefficients. If $n\geq d2^dR+R$ and the coefficients of $\vec{F}$ lie in an explicit Zariski open set, we give  a nonsingular Hasse principle for the equation $\vec{F}(\vec{x})=\vec{0}$, together with an asymptotic formula for the number of solutions to in integers of bounded height. This improves on the number of variables needed in previous results for general systems $\vec{F}$ as soon as the number of equations $R$ is at least 2 and the degree $d$ is at least 4.
\end{abstract}

\section{Introduction}

\subsection{Results}

Let $\vec{F}(\vec{x}) = \brax{F_1(\vec{x}),\dotsc,F_R(\vec{x})}^T$ be a vector of $R$ homogeneous forms of the same degree $d$, where $d\geq 2$, in $n$ variables  $\vec{x} =(x_1,\dotsc,x_n)^T$ and having integral coefficients. We write $V(\vec{F})$ for the projective variety in $\bbP_\bbQ^{n-1}$ cut out by the condition $\vec{F}(\vec{x})=\vec{0}$. Our main result, proved at the end of \S\ref{3.sec:aux_ineq}, is as follows:

\begin{theorem}\label{3.thm:main_thm_short}
		Let $\weightbox$ be a box in $\bbR^n$, contained in the box $\clsd{-1}{1}^n$ and having sides of length at most 1 which are parallel to the coordinate axes. For each $ P\geq 1$, write
		\begin{equation*}
		\numZeroesInBoxOf{\vec{F}}(P)
		=
		\#
		\set{ \vec{x}\in\bbZ^n \suchthat
			\vec{x}/P\in\weightbox,\,
			\vec{F}(\vec{x})=\vec{0}
		}.
		\end{equation*}
		Suppose that $\vec{F}\in U_{d,n,R}(\bbQ)$ for some explicit, nonempty Zariski open set $U_{d,n,R}$ which will be defined in Proposition~\ref{3.prop:general_position_condition} below. If we have
		\begin{equation}
		n
		>
		d2^d R+R.
		\label{3.eqn:condition_on_n_short}
		\end{equation}
		then for all $P\geq 1$ we have
		\begin{equation*}
		\numZeroesInBoxOf{\vec{F}}(P)
		=
		\singIntegralOf{\vec{F},\weightbox}\singSeriesOf{\vec{F}} P^{n-dR}
		+
		O\brax{P^{n-dR-\delta}}.
		\end{equation*}
		Here the implicit constant and the constant $\singSeriesOf{\vec{F}}$ depend only on the forms $F_i$, the constant  $\singIntegralOf{\vec{F},\weightbox}$ depends only on $\vec{F}$ and $\weightbox$, and $\delta$ is a positive constant depending only on $d$ and $R$. If  $V(\vec{F})$ has dimension $n-1-R$ and a smooth real point whose homogenous co-ordinates lie in $\weightbox$, then $\singIntegralOf{\vec{F},\weightbox}$ is positive. If $V(\vec{F})$ has dimension $n-1-R$ and a smooth point over $\bbQ_p$ for each prime $p$, then $\singSeriesOf{\vec{F}} $ is positive. 
	\end{theorem}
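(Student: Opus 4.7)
The plan is a Hardy--Littlewood circle method applied to the system $\vec F$. Setting $e(z)=e^{2\pi iz}$ and
\[
S(\vec\alpha)=\sum_{\vec x\in\bbZ^n,\,\vec x/P\in\weightbox} e\brax{\vec\alpha\cdot\vec F(\vec x)},
\]
orthogonality gives $\numZeroesInBoxOf{\vec F}(P)=\int_{[0,1)^R}S(\vec\alpha)\,d\vec\alpha$. First I would dissect $[0,1)^R$ into major arcs $\mathfrak M$, namely neighbourhoods of the rationals $\vec a/q\in\bbQ^R$ with $q\leq P^\eta$ for some sufficiently small $\eta>0$, and minor arcs $\mathfrak m=[0,1)^R\setminus\mathfrak M$. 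On $\mathfrak M$ a routine analysis (factor $S$ as a complete Gauss sum times an oscillatory integral, sum over $\vec a$ and $q$, extend the truncations) produces the main term $\singIntegralOf{\vec F,\weightbox}\singSeriesOf{\vec F}P^{n-dR}+O\brax{P^{n-dR-\delta_1}}$ for some $\delta_1>0$, provided $\eta$ is chosen small enough relative to the minor arc saving.

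The heart of the matter, and the main obstacle, is the minor arc estimate
\[
\int_{\mathfrak m}\abs{S(\vec\alpha)}\,d\vec\alpha=O\brax{P^{n-dR-\delta_2}}.
\]
For each $\vec\alpha\in\mathfrak m$ I would apply Weyl--Hua differencing $d-1$ times to the phase $\vec\alpha\cdot\vec F$, which bounds $\abs{S(\vec\alpha)}^{2^{d-1}}$ by a count of integer vectors in a suitable box at which the multilinear form associated to $\vec\alpha\cdot\leadingPart$ lies within $P^{-d+O(1)}$ of an integer. This latter count is precisely what the auxiliary inequality machinery of \S\ref{3.sec:aux_ineq} is set up to handle, and the Zariski open condition of Proposition~\ref{3.prop:general_position_condition} enters through a lower bound on $\worstTangentSpaceDimLeadingPart$, which guarantees that the multilinear form attached to any nonzero linear combination $\vec\beta\cdot\vec f$ has large enough rank. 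The crucial technical choice is to work with the single direction $\vec\beta=\vec\alpha/\abs{\vec\alpha}$ obtained from $\vec\alpha$, rather than treating the $R$ forms symmetrically as in Birch's original argument; it is this choice that reduces the penalty per extra equation from order $R(d-1)2^{d-1}$ variables to order $d2^d$ variables and yields the threshold $n>d2^dR+R$.

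Combining the major and minor arc contributions gives the asymptotic formula. The positivity statements then follow from standard local analysis: when $V(\vec F)$ has the expected dimension $n-1-R$, the implicit function theorem at a smooth real point in $\weightbox$ produces a neighbourhood of positive measure on $V(\vec F)(\bbR)\cap\weightbox$, so $\singIntegralOf{\vec F,\weightbox}>0$; similarly, Hensel's lemma applied at each smooth $\bbQ_p$-point yields positive $p$-adic densities whose product converges to a positive $\singSeriesOf{\vec F}$, the convergence again being ensured by the minor arc saving. The entire argument thus rests on delivering the auxiliary inequality of \S\ref{3.sec:aux_ineq} with the sharp rank hypothesis supplied by $U_{d,n,R}$, and this is where the genuinely new ideas of the paper must do the heavy lifting.
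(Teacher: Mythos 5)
Your high-level framework --- circle method, major/minor arcs, Weyl differencing down to the auxiliary inequality, local positivity --- is indeed what underlies the theorem, but it is not what the paper's proof of Theorem~\ref{3.thm:main_thm_short} does. All of that analytic machinery is imported as a black box from the companion paper: Theorem~\ref{1.thm:manin} of~\cite{quadSystemsManyVars} already delivers the asymptotic $\singIntegralOf{\vec F,\weightbox}\singSeriesOf{\vec F}P^{n-dR}+O(P^{n-dR-\delta})$, together with the positivity criteria for $\singIntegralOf{\vec F,\weightbox}$ and $\singSeriesOf{\vec F}$, as soon as one verifies $\auxIneqOfSomethingNumSolns{\vec\beta\cdot\vec f}(B)\ll B^{(d-1)n-2^d\cancellation}$ for all $\vec\beta$ and some $\cancellation>dR$. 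The paper's proof is therefore a short deduction: $\vec F\in U_{d,n,R}(\bbQ)$ is exactly the condition $\worstTangentSpaceDimCapitalF\leq R-1$; Proposition~\ref{3.prop:aux_ineq_bound_general_pos} then gives $\auxIneqOfSomethingNumSolns{\vec\beta\cdot\vec f}(B)\ll_\epsilon B^{(d-2)n+R-1+\epsilon}$; and setting $\cancellation=(n-R+\tfrac12)/2^d$ one checks that this is $\ll B^{(d-1)n-2^d\cancellation}$ and that $\cancellation>dR$ holds precisely under~\eqref{3.eqn:condition_on_n_short}, after invoking Proposition~\ref{3.prop:general_position_condition} to know that $U_{d,n,R}$ is nonempty and open. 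That is the whole proof.

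So your proposal would have you re-prove Theorem~\ref{1.thm:manin} from scratch, while the part that actually lives in this paper --- the bound in Proposition~\ref{3.prop:aux_ineq_bound_general_pos} and the nonemptiness of $U_{d,n,R}$ in Proposition~\ref{3.prop:general_position_condition} --- is left as a placeholder in your sketch. Two further inaccuracies: you say the general-position hypothesis enters as a ``lower bound on $\worstTangentSpaceDimLeadingPart$,'' but it is an \emph{upper} bound $\worstTangentSpaceDimLeadingPart\leq R-1$ (small $\sigma^\ast$ is what forces the multilinear Jacobian to have large rank). And attributing the gain from $R(R+1)$ to $R$ solely to passing to a single direction $\vec\beta=\vec\alpha/\abs{\vec\alpha}$ is not accurate: Dietmann and Schindler already work with a single linear combination $\vec a\cdot\vec F$ and still land on $(d-1)2^{d-1}R(R+1)$; the improvement here comes jointly from the refined minor-arc decomposition behind Theorem~\ref{1.thm:manin} and the sharper auxiliary-inequality bound in terms of $\sigma^\ast$ proved in Proposition~\ref{3.prop:aux_ineq_bound_general_pos}.
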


The requirement that $\vec{F}\in U_{d,n,R}(\bbQ)$ is satisified for 100\% of systems $\vec{F}$, and can in principle be tested algorithmically for any particular system $\vec{F}$ with integral coefficients. In future work we will remove this hypothesis at the cost of an increased number of variables.

Roughly speaking, to have $\vec{F}\in U_{d,n,R}(\bbQ)$ means that all the tangent spaces to some auxiliary varieties should have codimension $n-R+1$ or greater. In a sense then we ask that these auxiliary varieties should not be too singular; see \S\ref{3.sec:aux_ineq} for more details. This does not appear to have a natural interpretation in terms of the original equations $\vec{F}(\vec{x})=\vec{0}$.

When $d =2$ or 3, previous work of the author provides the same conclusion with the condition $\vec{F}\in U_{d,n,R}(\bbQ)$ replaced by the condition that $V(\vec{F})$ be smooth of dimension $n-R-1$. See Theorem~\ref{1.thm:main_thm_short} and the comments after Lemma~\ref{1.lem:nonsing_case} in~\cite{quadSystemsManyVars} for the case $d=2$ and  see~Theorem~1.2 in~\cite{systemsCubicForms} for the case $d=3$. The case of interest in the theorem is thus $d\geq 4$.

We outline what is known in that case. A longstanding result of Birch~\cite{birchFormsManyVars} gives the conclusion of Theorem~\ref{3.thm:main_thm_short} with the conditions $\vec{F}\in U_{d,n,R}(\bbQ)$ and  \eqref{3.eqn:condition_on_n_short} replaced by
\begin{equation}\label{3.eqn:birch's_condition_long}
n-1-\dim W
> (d-1)2^{d-1}R(R+1),
\end{equation}
where $W$ is the projective variety cut out in $\bbP_\bbQ^{n-1}$ by the condition that the $R\times n$ Jacobian matrix $\brax{\partial F_i(\vec{x})/ \partial x_j}_{ij}$ has rank less than $R$. In particular, if $V(\vec{F})$ is smooth of dimension $n-R-1$ then  \eqref{3.eqn:birch's_condition_long} holds whenever
\begin{equation*}
n \geq
(d-1)2^{d-1}R(R+1)+R,
\end{equation*}
see for example Lemma~1.1 in~\cite{quadSystemsManyVars}. There is a refinement of \eqref{3.eqn:birch's_condition_long} due to Dietmann~\cite{dietmannWeylsIneq} and to Schindler~\cite{schindlerWeylsIneq}. They show that it suffices to have
\begin{equation}\label{3.eqn:d-s_condition}
n-\sigma_\bbZ(\vec{F})>(d-1)2^{d-1}R(R+1),
\end{equation}
where we write
\begin{equation*}
\sigma_\bbZ(\vec{F})= 1+\max_{\vec{a}\in\bbZ^R\setminus\set{\vec{0}}} \dim\sing (\aDotCapitalF).
\end{equation*}
When $R=1$ this is identical to Birch's condition, but once $R\geq 2$ it may be weaker.

Provided that $d\geq 4$ and $R\geq 2$ our condition \eqref{3.eqn:condition_on_n_short} on the number of variables is weaker than any of \eqref{3.eqn:birch's_condition_long}--\eqref{3.eqn:d-s_condition}, since
\[
d2^dR+R
<
3\cdot(d-1)2^{d-1}R
\leq 
(d-1)2^{d-1}R(R+1).
\]
For example, when $d=4$ and $R=2$, Birch's result applies to smooth intersections of pairs of quartics in $n\geq 148$ variables, while \eqref{3.eqn:condition_on_n_short} requires $n\geq 130$ for pairs of quartics in general position.

In the case when $R=1$ stronger results are available. For a smooth quartic hypersurface Hanselmann~\cite{hanselmannQuartics40Vars} gives the condition $n\geq 40$ in place of the $n \geq 49$ required to apply Birch's result, and work in progress of Marmon and Vishe yields a further improvement. When $R=1$ and $d\geq 5$, a sharper condition than \eqref{1.eqn:birch's_condition_long} is available by work of Browning and Prendiville~\cite{browningPrendivilleImprovements}. For a smooth hypersurface with $5\leq d\leq 9$ this is essentially a reduction of 25\% in the number of variables required.

\subsection{The auxiliary inequality}\label{3.sec:aux_ineq}

By previous work of the author~\cite[Theorem~\ref{1.thm:manin}]{quadSystemsManyVars} it will suffice to bound the number of solutions to a certain multilinear inequality. We quote the following definition from~\cite[Definition~\ref{1.def:aux_ineq}]{quadSystemsManyVars}.

\begin{definition}\label{3.def:aux_ineq}
	For each $k \in\bbN\setminus\set{\vec{0}}$ and $\vec{t}\in\bbR^k$ we write  $\supnorm{\vec{t}} = \max_i \abs{t_i}$ for the supremum norm. Let $f(\vec{x})$ be any polynomial of degree $d\geq 2$ with real coefficients in $n$ variables $x_1,\dotsc,x_n$.  For $i= 1,\dotsc, n$ we define
	\begin{equation*}
	m^{( f )}_i \brax{\vec{x}^{(1)},\dotsc,\vec{x}^{(d-1)} }
	=
	\sum_{j_1,\dotsc,j_{d-1}=1}^n
	x^{(1)}_{j_1} \dotsm x^{(d-1)}_{j_{d-1}}
	\frac{\partial^{d} f(\vec{x})}{\partial x_{j_1} \dotsm \partial x_{j_{d-1}} \partial x_i},
	\end{equation*}
	where we write $\vecsuper{x}{j}$  for a vector of $n$ variables $(x^{(j)}_1,\dotsc,x^{(j)}_n)^T$. This defines an $n$-tuple of multilinear forms
	\[
	\gradSomethingMultilinear{ f } {\vec{x}^{(1)},\dotsc,\vec{x}^{(d-1)} }\in \bbR[\vec{x}^{(1)},\dotsc,\vec{x}^{(d-1)}]^n.
	\]
	Finally, for each  $B \geq 1$  we put $\auxIneqOfSomethingNumSolns{f} \brax{ B }$ for the number of $(d-1)$-tuples of integer $n$-vectors $\vec{x}^{(1)}, \dotsc, \vec{x}^{(d-1)}$ with
	\begin{gather}
	\supnorm{\vecsuper{x}{1}},\dotsc,\supnorm{\vecsuper{x}{d-1}} \leq B, 
	\nonumber
	\\
	\label{3.eqn:aux_ineq}
	\supnorm{\gradSomethingMultilinear{ f }{ \vec{x}^{(1)}, \dotsc, \vec{x}^{(d-1)} }} < \supnorm{ f^{[d]} } B^{d-2}
	\end{gather}
	where we let $\supnorm{f^{[d]}} = \frac{1}{d!} \max_{\vec{j}\in\set{1,\dotsc,n}^d} \abs[\big]{\frac{\partial^d f(\vec{x})}{\partial x_{j_1}\dotsm\partial x_{j_d}}}$.
\end{definition}

Our results will involve a quantity $\worstTangentSpaceDimCapitalH$ defined as follows.

\begin{definition}\label{3.def:multilin_Jacobian}
	Suppose that $f(\vec{x})$ is a polynomial of degree $d$ in $n$ variables, and that $d\geq 2$, and let $\gradSomethingMultilinear{f}{\vecsuper{x}{1},\dotsc,\vecsuper{x}{d-1}}$ be as in Definition~\ref{3.def:aux_ineq}. Then we write $\fMultilinJacobian{\vecsuper{x}{1},\dotsc,\vecsuper{x}{d-1}}$ for the $n\times (d-1)n$ Jacobian matrix of $\gradSomethingMultilinear{f}{\vecsuper{x}{1},\dotsc,\vecsuper{x}{d-1}}$, that is,
	\begin{multline}\label{3.eqn:def_of_multilin_Jacobian}
	\fMultilinJacobian{\vecsuper{x}{1},\dotsc,\vecsuper{x}{d-1}}
	=
	\\
	\left(
	\begin{array}{@{}c|c|c|c@{}}
	\frac{\partial\gradSomethingMultilinear{f}{\vecsuper{x}{1},\dotsc,\vecsuper{x}{d-1}}}{\partial x^{(1)}_1}
	&
	\frac{\partial\gradSomethingMultilinear{f}{\vecsuper{x}{1},\dotsc,\vecsuper{x}{d-1}}}{\partial x^{(1)}_2}
	&
	\cdots
	&
	\frac{\partial\gradSomethingMultilinear{f}{\vecsuper{x}{1},\dotsc,\vecsuper{x}{d-1}}}{\partial x^{(d-1)}_n}
	\end{array}
	\right).
	\end{multline}
	If $\vec{H}(\vec{x})$ is a system of $R$ homogeneous polynomials of the same degree $d$ in $n$ variables, with coefficients in a field $\bbF$, then we set
	\begin{equation}\label{3.eqn:def_of_sigma-star}
	\worstTangentSpaceDimCapitalH
	\eqdef
	n-
	\min_{\vec{\beta}\in\bar{\bbF}^R\setminus\set{\vec{0}} }
	\min_{\substack{ \vecsuper{x}{1}, \dotsc,\vecsuper{x}{d-1}\in\bar{\bbF}^n\setminus\set{\vec{0}} \\ \gradSomethingMultilinear{\vec{\beta}\cdot\vec{H}}{\vecsuper{x}{1},\dotsc,\vecsuper{x}{d-1}}=\vec{0} }} 
	\rank \betaDotHMultilinJacobian{\vecsuper{x}{1},\dotsc,\vecsuper{x}{d-1}}
	\end{equation}
	where $\bar{\bbF}$ is an algebraic closure of $\bbF$.
\end{definition}

In \eqref{3.eqn:def_of_sigma-star} one could think of $\rank \betaDotHMultilinJacobian{\vecsuper{x}{1},\dotsc,\vecsuper{x}{d-1}}$ as the codimension of the tangent space to the variety $\gradSomethingMultilinear{\vec{\beta}\cdot\vec{H}}{\vecsuper{x}{1},\dotsc,\vecsuper{x}{d-1}}=\vec{0}$ at the point $\brax{\vecsuper{x}{1},\dotsc,\vecsuper{x}{d-1}}$. In this sense $\worstTangentSpaceDimCapitalH$ could be said to measure the extent to which these varieties are singular. It does not however seem to be fruitful to pursue this interpretation further.

Our upper bound for $\auxIneqOfSomethingNumSolns{f} \brax{ B }$ in terms of this quantity $\worstTangentSpaceDimLeadingPart$ is as follows. The proof is in \S\ref{3.sec:aux_ineq_bound_proof}.

\begin{proposition}\label{3.prop:aux_ineq_bound_general_pos}
	Let  $\gradSomethingMultilinear{f}{\vecsuper{x}{1},\dotsc,\vecsuper{x}{d-1}}$ be as in Definition~\ref{3.def:aux_ineq} and let $\worstTangentSpaceDimCapitalH$ be as in Definition~\ref{3.def:multilin_Jacobian}. For all $\vec{\beta}\in\bbR^R$ and all $B \geq 1$ we have
	\begin{equation*}
	\auxIneqNumSolns\brax{ B }
	\ll_{\vec{f}}
	B^{(d-2)n+\worstTangentSpaceDimLeadingPart}
	\brax{\log B}^{d-1}.
	\end{equation*}
\end{proposition}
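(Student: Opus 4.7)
The plan is to identify $\worstTangentSpaceDimLeadingPart$ as an upper bound on the dimension of the variety $V_{\vec{\beta}}$ cut out by $\gradSomethingMultilinear{\vec{\beta}\cdot\leadingPart}{\cdot}=\vec{0}$, and then to count the integer tuples in a $B^{d-2}$-fattening of $V_{\vec{\beta}}$.

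First I would note that $\gradSomethingMultilinear{g}{\cdot}$ and its Jacobian depend on $g$ only through its $d$-th order derivatives, so neither \eqref{3.eqn:aux_ineq} nor $\worstTangentSpaceDimLeadingPart$ changes when $\vec{f}$ is replaced by $\leadingPart$; the count is trivially zero when $\vec{\beta}\cdot\leadingPart=\vec{0}$, so I assume $\vec{\beta}\cdot\leadingPart\neq\vec{0}$. Set
\[
V_{\vec{\beta}}=\set[\big]{(\vecsuper{x}{1},\dotsc,\vecsuper{x}{d-1})\in\bbC^{(d-1)n} \suchthat \gradSomethingMultilinear{\vec{\beta}\cdot\leadingPart}{\vecsuper{x}{1},\dotsc,\vecsuper{x}{d-1}}=\vec{0}}.
\]
By Definition~\ref{3.def:multilin_Jacobian}, at every point of $V_{\vec{\beta}}$ with all $\vecsuper{x}{j}\neq\vec{0}$ the Jacobian $\betaFMultilinJacobian{\vecsuper{x}{1},\dotsc,\vecsuper{x}{d-1}}$ has rank at least $n-\worstTangentSpaceDimLeadingPart$, so by the tangent-space inequality each component of $V_{\vec{\beta}}$ meeting this open locus has dimension at most $(d-2)n+\worstTangentSpaceDimLeadingPart$.

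Next I would count. Integer tuples in $\clsd{-B}{B}^{(d-1)n}$ satisfying \eqref{3.eqn:aux_ineq} lie in a $B^{d-2}$-fattening of $V_{\vec{\beta}}$. By the multilinearity of $\gradSomethingMultilinear{\vec{\beta}\cdot\leadingPart}{\cdot}$, fixing $d-2$ of the vectors turns \eqref{3.eqn:aux_ineq} into a linear inequality on the remaining vector whose coefficient matrix is a block of $\betaFMultilinJacobian{\cdot}$. Iterating a Davenport-style lattice-point lemma across the $d-1$ coordinate blocks, and combining with the dimension bound above, should then yield $\ll B^{(d-2)n+\worstTangentSpaceDimLeadingPart}(\log B)^{d-1}$ solutions, with one logarithm arising from each dyadic decomposition of a coordinate block. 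Tuples with some $\vecsuper{x}{j}=\vec{0}$ reduce to a case in fewer vectors and contribute only a smaller-order term.

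The main obstacle will be the lattice-point counting itself: converting the dimension bound $\dim V_{\vec{\beta}}\leq(d-2)n+\worstTangentSpaceDimLeadingPart$ into a bound of the form $B^{\dim V_{\vec{\beta}}}(\log B)^{d-1}$ on integer points in its fattening. Making this precise requires, at each iterative stage, exhibiting a Jacobian block of sufficiently large rank to apply a sharp lattice-point estimate; the symmetry of $\vec{m}^{(\vec{\beta}\cdot\leadingPart)}$ in its $d-1$ arguments, together with the uniform rank bound coming from $\worstTangentSpaceDimLeadingPart$, is precisely what should make this possible.
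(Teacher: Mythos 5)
The high-level plan is in the right spirit — use the rank lower bound coming from $\worstTangentSpaceDimLeadingPart$, exploit multilinearity block by block, and sum over a dyadic decomposition with one logarithm per block — but you have smoothed over the step that carries all the difficulty, and the way you propose to execute it would not go through as stated.

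The missing idea is the passage from a \emph{qualitative} rank condition to a \emph{quantitative} one. The definition of $\worstTangentSpaceDimLeadingPart$ only says: at points of $\bar{\bbF}^{(d-1)n}$ where $\gradSomethingMultilinear{\vec{\beta}\cdot\leadingPart}{\cdot}$ is exactly zero and the arguments are nonzero, the Jacobian has rank at least $n-\worstTangentSpaceDimLeadingPart$. Your proposal then asserts (i) that integer solutions to \eqref{3.eqn:aux_ineq} lie in a $B^{d-2}$-fattening of $V_{\vec\beta}$, and (ii) that such a fattening of a variety of dimension at most $(d-2)n+\worstTangentSpaceDimLeadingPart$ contains $\ll B^{(d-2)n+\worstTangentSpaceDimLeadingPart}(\log B)^{d-1}$ lattice points. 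Neither (i) nor (ii) follows from the rank condition alone: (i) needs some Lojasiewicz-type lower bound saying ``small value implies near the zero set,'' and (ii) is simply false for general varieties (a bound of the shape $B^{\dim}$ for lattice points near a variety needs structural input, not just a dimension bound). The paper supplies both at once with Lemma~\ref{3.lem:Jacobian_of_grad_f}, a compactness argument: for real $\vec\beta,\vecsuper{z}{1},\dotsc,\vecsuper{z}{d-1}$ of unit sup-norm, \emph{either} $\supnorm{\gradFMultilinear{\vec\beta}{\cdot}}\gg_{\vec f}1$, \emph{or} there are coordinate subspaces $U_1,\dotsc,U_{d-1}$ (each spanned by standard basis vectors, with $\sum\dim U_i=n-\worstTangentSpaceDimLeadingPart$) on which the Jacobian has a uniform lower bound. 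This is strictly stronger than the rank statement because it is uniform over all real points, and it is exactly what lets one cover the solution set by $O(1)$ boxes of controlled sidelength in each coordinate block; the bound $\ll B^{\dim}$ then falls out by counting lattice points box by box. The auxiliary Lemma~\ref{3.lem:space_where_matrix_is_small}, which produces the coordinate subspaces $U_i$, is also essential: without subspaces aligned to the standard basis one cannot make the ``iterating a Davenport-style lemma across the $d-1$ blocks'' step precise, since one doesn't know which variables to fix.

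So the gap is concrete: you invoke ``a Davenport-style lattice-point lemma'' and ``the uniform rank bound'' as if they were available, but the uniform bound is the theorem you need to prove (by compactness), and the coordinate-subspace structure needed to iterate blockwise is an additional lemma, not a consequence of the definition of $\worstTangentSpaceDimLeadingPart$. The paper's argument also does \emph{not} appeal to the algebraic dimension of $V_{\vec\beta}$ at all; it works directly with a quantitative lower bound on the Jacobian and a covering by dyadic boxes (including a separate trivial count for the small dyadic blocks, which your proposal does not isolate). To turn your sketch into a proof you would essentially have to reconstruct Lemma~\ref{3.lem:space_where_matrix_is_small} and Lemma~\ref{3.lem:Jacobian_of_grad_f}.
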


The following result, proved in \S\ref{3.sec:general_pos_aux_ineq_bound}, shows  that $\worstTangentSpaceDimCapitalH$ is typically quite small.

\begin{proposition}\label{3.prop:general_position_condition}
		Let $\worstTangentSpaceDimCapitalH$ be as in Definition~\ref{3.def:multilin_Jacobian} Suppose that $n\geq R$ holds. We may consider the space of $R$-tuples of homogeneous degree $d$ forms in $n$ variables as an affine space defined over \tbbQ. The condition that  $\worstTangentSpaceDimCapitalH\leq R-1$ defines a nonempty Zariski open set $U_{d,n,R}$ in this space.
\end{proposition}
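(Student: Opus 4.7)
\emph{Plan.} I would realize $U_{d,n,R}$ as the complement of the image of an incidence variety under a proper projection, and then bound the dimension of this image to show it is a proper subvariety. Let $\mathcal{F}$ denote the affine space of $R$-tuples of degree-$d$ forms in $n$ variables; inside $\mathcal{F} \times \mathbb{P}^{R-1} \times (\mathbb{P}^{n-1})^{d-1}$, take the incidence variety $Z$ of tuples $(\vec{H}, [\vec{\beta}], [\vecsuper{x}{1}], \dotsc, [\vecsuper{x}{d-1}])$ satisfying $\gradSomethingMultilinear{\vec{\beta}\cdot\vec{H}}{\vecsuper{x}{1},\dotsc,\vecsuper{x}{d-1}} = \vec{0}$ and $\rank \betaDotHMultilinJacobian{\vecsuper{x}{1},\dotsc,\vecsuper{x}{d-1}} \leq n-R$. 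Both conditions are multihomogeneous polynomial equations --- the rank bound via the vanishing of all $(n-R+1)\times(n-R+1)$ minors --- so $Z$ is Zariski closed. Completeness of the projective factors makes the projection $\pi \colon Z \to \mathcal{F}$ closed, and by Definition~\ref{3.def:multilin_Jacobian} its image is the locus $\set{\vec{H} \suchthat \worstTangentSpaceDimCapitalH \geq R}$; its complement is $U_{d,n,R}$, which is therefore Zariski open.

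For nonemptiness I would dimension-count by projecting $Z$ to the base $\mathbb{P}^{R-1}\times(\mathbb{P}^{n-1})^{d-1}$ (of dimension $(R-1)+(d-1)(n-1)$) and bounding the codimension of the generic fibre in $\mathcal{F}$. At the convenient base point $\vec{\beta}=\vec{e}_1$ and $\vecsuper{x}{k}=\vec{e}_k$, the $n$ equations $\gradSomethingMultilinear{H_1}{\vec{e}_1,\dotsc,\vec{e}_{d-1}} = \vec{0}$ pick out (up to scalars) $n$ distinct monomial coefficients of $H_1$ and so cut out codimension exactly $n$ in $\mathcal{F}$. The determinantal condition on the $n \times (d-1)n$ Jacobian, whose entries are linear in $\vec{H}$, should contribute a further codimension close to the generic value $R((d-2)n+R)$. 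A routine arithmetic check then shows $\dim Z < \dim \mathcal{F}$ whenever $n \geq R \geq 1$ and $d \geq 2$, so $\pi(Z)$ is a proper subvariety and $U_{d,n,R}$ is nonempty.

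The main obstacle is justifying the claimed codimension from the rank-drop condition. The linear map $\vec{H} \mapsto \betaDotHMultilinJacobian{\vecsuper{x}{1},\dotsc,\vecsuper{x}{d-1}}$ factors through $\vec{H} \mapsto \vec{\beta}\cdot\vec{H}$ and also inherits the symmetries of higher derivatives, so its image in the space of $n \times (d-1)n$ matrices is a proper linear subspace; one cannot simply pull back the generic determinantal codimension. I would handle this either by identifying enough of this image to lower-bound the codimension of the pullback of the rank-drop locus in $\mathcal{F}$, or by producing an explicit $\vec{H}_0$ at which the Jacobian attains rank $n$ at every nontrivial zero of $\gradSomethingMultilinear{\vec{\beta}\cdot\vec{H}_0}{\vecsuper{x}{1},\dotsc,\vecsuper{x}{d-1}}$ and invoking upper semicontinuity of the rank on $Z$ to conclude.
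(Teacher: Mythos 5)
Your argument that $U_{d,n,R}$ is Zariski open is correct and essentially the same as the paper's: the paper also introduces the variety $\Sigma_{\vec{H}}\subset\bbP^{R-1}\times(\bbP^{n-1})^{d-1}$ cut out by the two conditions $\gradSomethingMultilinear{\vec{\beta}\cdot\vec{H}}{\vecsuper{x}{1},\dotsc,\vecsuper{x}{d-1}}=\vec{0}$ and $\rank \betaDotHMultilinJacobian{\vecsuper{x}{1},\dotsc,\vecsuper{x}{d-1}}\le n-R$, and observes that ``$\Sigma_{\vec{H}}$ is nonempty'' is a Zariski-closed condition on $\vec{H}$. Your incidence-variety/proper-projection formulation is just a more explicit version of the same point, and your identification of $\worstTangentSpaceDimCapitalH\ge R$ with $\Sigma_{\vec{H}}\ne\emptyset$ matches \eqref{3.eqn:def_of_sigma-star}.

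The nonemptiness half has a genuine gap, one you have put your finger on yourself. Your plan requires a lower bound on the codimension, inside $\mathcal{F}$, of the fibre of $Z$ over \emph{every} base point $\bigl([\vec{\beta}],[\vecsuper{x}{1}],\dotsc,[\vecsuper{x}{d-1}]\bigr)$, not merely the convenient one $\vec{\beta}=\vec{e}_1$, $\vecsuper{x}{k}=\vec{e}_k$. For the $n$ linear conditions $\gradSomethingMultilinear{\vec{\beta}\cdot\vec{H}}{\vecsuper{x}{1},\dotsc,\vecsuper{x}{d-1}}=\vec{0}$ this may be manageable, but for the rank-drop locus the linear map $\vec{H}\mapsto \betaDotHMultilinJacobian{\vecsuper{x}{1},\dotsc,\vecsuper{x}{d-1}}$ has image varying with the base point, and because of the symmetry of the higher partials it is always a proper and hard-to-describe subspace of the space of all $n\times(d-1)n$ matrices. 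So the generic determinantal codimension $R\bigl((d-2)n+R\bigr)$ cannot be invoked; and both of your proposed fallbacks, namely controlling this image or exhibiting an explicit $\vec{H}_0$ with $\worstTangentSpaceDimCapitalF\le R-1$ directly, are left entirely open. (Such an explicit $\vec{H}_0$ would indeed finish nonemptiness at a stroke, but it is far from obvious how to find one, and the paper's proof is nonconstructive.)

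The paper circumvents the codimension problem altogether. Assuming for a contradiction that $\Sigma_{\vec{H}}\ne\emptyset$ for every system $\vec{H}$, it builds a chain of auxiliary incidence varieties: $\Sigma_0\subset\bbP^{N(d,n)-1}\times(\bbP^{n-1})^{d-1}$, where the single form $H$ replaces $\vec{\beta}\cdot\vec{H}$, whose dimension is bounded below because every $(R-1)$-plane $\Theta$ of forms must meet $\Sigma_0$; then $\Sigma_1$, which unfolds the rank condition into an extra kernel vector $\vecsuper{x}{d}$ via $\HMultilinJacobian{\vecsuper{x}{1},\dotsc,\vecsuper{x}{d-1}}^T\vecsuper{x}{d}=\vec{0}$; and finally a variety $W\subset(\bbP^{n-1})^d\times\bbP^{dn-1}$ cut out by the linear conditions $\vec{w}^T L(\vecsuper{x}{1},\dotsc,\vecsuper{x}{d})=\vec{0}$. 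Crucially $W$ no longer involves the choice of form at all, and by specializing to $H=(\vec{b}\cdot\vec{x})^d$ one deduces the identity $\sum_i (\vec{b}\cdot\vecsuper{w}{i})/(\vec{b}\cdot\vecsuper{x}{i})=0$ for all $\vec{b}$, which lets $W(\bbC)$ be parametrized with only $dn-2$ parameters and contradicts $\dim W\ge dn-1$. This reduction to an explicitly parametrizable variety, independent of $\vec{H}$, is precisely the idea your dimension count lacks.
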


We  deduce Theorem~\ref{3.thm:main_thm_short}.

\begin{proof}[Proof of Theorem~\ref{3.thm:main_thm_short}]
	Note that \eqref{3.eqn:condition_on_n_short} certainly implies $n\geq R$ and so $U_{d,n,R}$ is a nonempty Zariski open set, by Proposition~\ref{3.prop:general_position_condition}. The condition $\vec{F}\in U_{d,n,R}(\bbQ)$ means exactly that $\worstTangentSpaceDimCapitalF\leq R-1$, and so by Proposition~\ref{3.prop:aux_ineq_bound_general_pos} we hve
	\begin{align*}
	\auxIneqOfSomethingNumSolns{\vec{\beta}\cdot\vec{f}}\brax{B}
	&\ll_{\vec{f},\epsilon}
	B^{(d-2)n+R-1+\epsilon}
	\\
	&\ll_{\vec{f}}
	B^{(d-1)n-2^d \cancellation}
	\end{align*}
	where
	\[
	\cancellation = \frac{n-R+\tfrac{1}{2}}{2^d}.
	\]
	We have $\cancellation > dR$, by our assumption \eqref{3.eqn:condition_on_n_short}. The conclusion of the theorem now follow on applying Theorem~\ref{1.thm:manin} from~\cite{quadSystemsManyVars}.
\end{proof}

\section{Counting solutions to the auxiliary inequality}\label{3.sec:general_pos_aux_ineq_bound}

In this section we prove Proposition~\ref{3.prop:aux_ineq_bound_general_pos}. We begin with a   lemma giving an analytic intepretation of the quantity $\sigma^\ast$ from Definition~\ref{3.def:multilin_Jacobian}.

\subsection{Finding spaces on which the Jacobian is large}\label{3.sec:aux_ineq_quantitative_nonsing}

The  result below shows that, provided $\worstTangentSpaceDimLeadingPart$ is small, then for every point where $\supnorm{\gradFMultilinear{\vec{\beta}}{\vecsuper{x}{1},\dotsc,\vecsuper{x}{d-1}}}$ is small, there are many ways in which we can perturb the variables $\vecsuper{x}{i}$ such that $\supnorm{\vecsuper{m}{\betaDotLittleF}}$ increases rapidly.

\begin{lemma}\label{3.lem:Jacobian_of_grad_f}
	Let $\fMultilinJacobian{\vecsuper{x}{1},\dotsc,\vecsuper{x}{d-1}}$ and $\sigma^\ast$  be as in Definition~\ref{3.def:multilin_Jacobian}. Suppose that $\vec{\beta}\in\bbR^R\setminus\set{\vec{0}}$ and that $\vecsuper{x}{1},\dotsc,\vecsuper{x}{d-1}\in \bbR^n\setminus\set{\vec{0}}$. Then one of the following two alternatives holds: either we have
	\begin{equation*}
	\supnorm{\gradFMultilinear{\vec{\beta}}{\vecsuper{x}{1},\dotsc,\vecsuper{x}{d-1}}}
	\gg_{\vec{f}}
	\supnorm{\vec{\beta}} \supnorm{\vecsuper{x}{1}}\dotsm\supnorm{\vecsuper{x}{d-1}},
	\end{equation*}
	or else there exist linear subspaces $U_1,\dotsc,U_{d-1}$ of $ \bbR^n$, satisfying
	\[
	\dim U_1 +\dotsb+\dim U_{d-1}=n-\worstTangentSpaceDimLeadingPart,
	\]
	such that for all $\vecsuper{u}{1}\in U_1, \dotsc, \vecsuper{u}{d-1}\in U_{d-1}$, we have
	\begin{multline*}
	\supnormbigg{ \betaFMultilinJacobian{\vecsuper{x}{1},\dotsc,\vecsuper{x}{d-1}}  
		\begin{smallpmatrix}\vecsuper{u}{1} \bigstrut \\ \hline \raisebox{5pt}{\scalebox{.75}{\vdots}} \\ \hline \vecsuper{u}{d-1} \end{smallpmatrix}
	} 
	\\
	\gg_{\vec{f}}
	\supnorm{\vec{\beta}}
	\supnorm{\vecsuper{x}{1}}\dotsm\supnorm{\vecsuper{x}{d-1}}
	\end{multline*}
	Furthermore we may take the spaces $U_i$ to be  spanned by standard basis vectors of $\bbR^n$.
\end{lemma}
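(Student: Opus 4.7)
The plan is a compactness argument tailored to the definition of $\worstTangentSpaceDimLeadingPart$. Since the entries of $\gradFMultilinear{\vec{\beta}}{\vecsuper{x}{1},\dotsc,\vecsuper{x}{d-1}}$ are multilinear in $(\vec{\beta},\vecsuper{x}{1},\dotsc,\vecsuper{x}{d-1})$, and the entries of $\betaFMultilinJacobian{\vecsuper{x}{1},\dotsc,\vecsuper{x}{d-1}}$ depend linearly on $\vec{\beta}$ and multilinearly on the $\vecsuper{x}{j'}$ with $j'\neq j$ in the $j$-th block of columns, both asserted inequalities are homogeneous under rescaling of $\vec{\beta}$ and of the $\vecsuper{x}{j}$. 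Accordingly, I would first reduce to the compact set
\[
K = \setbrax{(\vec{\beta},\vecsuper{x}{1},\dotsc,\vecsuper{x}{d-1}) \suchthat \supnorm{\vec{\beta}}=\supnorm{\vecsuper{x}{1}}=\dotsb=\supnorm{\vecsuper{x}{d-1}}=1}.
\]

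Next I would exploit the definition of $\worstTangentSpaceDimLeadingPart$ via a pointwise linear-algebraic step. At any point of $K$ where $\gradFMultilinear{\vec{\beta}}{\vecsuper{x}{1},\dotsc,\vecsuper{x}{d-1}}=\vec{0}$, equation~\eqref{3.eqn:def_of_sigma-star} forces $\rank \betaFMultilinJacobian{\vecsuper{x}{1},\dotsc,\vecsuper{x}{d-1}} \geq n-\worstTangentSpaceDimLeadingPart$. The $(d-1)n$ columns of this Jacobian are naturally indexed by pairs $(j,k)\in\set{1,\dotsc,d-1}\times\set{1,\dotsc,n}$, so one can extract $n-\worstTangentSpaceDimLeadingPart$ linearly independent columns; grouping the extracted indices by their first coordinate yields subsets $S_j\subseteq\set{1,\dotsc,n}$ with $\abs{S_1}+\dotsb+\abs{S_{d-1}} = n-\worstTangentSpaceDimLeadingPart$, and setting $U_j=\operatorname{span}\set{\vec{e}_k\suchthat k\in S_j}$ produces coordinate-aligned subspaces on which the Jacobian acts injectively.

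I would then close the argument by compactness. Define
\[
\phi = \supnorm{\gradFMultilinear{\vec{\beta}}{\vecsuper{x}{1},\dotsc,\vecsuper{x}{d-1}}} + \max_{S_\bullet}\; \min_{\substack{\vec{u}\in U_1\oplus\dotsb\oplus U_{d-1}\\ \supnorm{\vec{u}}=1}} \supnorm{\betaFMultilinJacobian{\vecsuper{x}{1},\dotsc,\vecsuper{x}{d-1}}\vec{u}},
\]
where $S_\bullet$ ranges over the finite collection of admissible $(d-1)$-tuples of subsets, and the $U_j$ are determined by $S_\bullet$ as above. Each inner minimum is the smallest singular value (with respect to $\supnorm{\cdot}$) of a submatrix varying continuously on $K$, so $\phi$ is a continuous function on $K$; the previous step shows $\phi>0$ pointwise, and compactness then gives $\phi\geq c(\vec{f})>0$ uniformly on $K$. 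Splitting $\phi$ into its two summands yields the dichotomy of the lemma, where (as is needed for the inequality to be homogeneous in $\vec{u}$) the second alternative must be read with an implicit factor of $\supnorm{\vec{u}}$ on the right-hand side, which is exactly what a lower bound on the smallest singular value delivers.

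The principal subtlety is the constraint that the subspaces $U_j$ be coordinate-aligned rather than arbitrary subspaces of $\bbR^n$. This is handled by selecting linearly independent \emph{columns} of the Jacobian (rather than an arbitrary basis of its column space) in the rank-extraction step, which automatically produces coordinate-aligned subspaces; it also forces the choice $S_\bullet$ to be treated as a discrete parameter, hence the finite maximum inside $\phi$.
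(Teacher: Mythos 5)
Your proposal is correct and is essentially the same in substance as the paper's proof: both normalize to the unit sphere, both exploit the fact that the definition~\eqref{3.eqn:def_of_sigma-star} forces $\rank\betaFMultilinJacobian{\vecsuper{x}{1},\dotsc,\vecsuper{x}{d-1}}\geq n-\worstTangentSpaceDimLeadingPart$ wherever $\gradFMultilinear{\vec{\beta}}{\vecsuper{x}{1},\dotsc,\vecsuper{x}{d-1}}=\vec{0}$, both obtain coordinate-aligned subspaces by selecting linearly independent columns, and both turn this into a quantitative statement by compactness. The organizational difference is that the paper invokes the cited Lemma~\ref{3.lem:space_where_matrix_is_small} to get, for each $C\geq1$, a dichotomy between a good coordinate subspace and a bad subspace on which the Jacobian is $C^{-1}$-small, and then argues by contradiction along a convergent subsequence as $C\to\infty$; you instead package the column-selection step into the continuous auxiliary function $\phi$ (a finite max, over coordinate choices $S_\bullet$, of smallest restricted singular values, plus $\supnorm{\gradFMultilinear{\vec{\beta}}{\cdot}}$) and extract the uniform lower bound from compactness in a single step. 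Your version is a bit more self-contained and also makes explicit a step the paper leaves tacit, namely that at the limit point of the subsequence one must have $\gradFMultilinear{\vec{\beta}}{\cdot}=\vec{0}$ before the definition of $\sigma^\ast$ can be applied. Finally, your parenthetical note is right: the right-hand side of the lemma's second alternative is missing a factor (the correct one is $\max_i\supnorm{\vecsuper{u}{i}}/\supnorm{\vecsuper{x}{i}}$); the unnormalization in the paper's own proof and the subsequent invocation at \eqref{3.eqn:Jacobian_of_grad_f_with_dyadic_variables} both confirm that this factor is intended, so the lemma statement as printed has a typo.
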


We give a proof after stating the following simple lemma on real matrices, which is Lemma~3.2(iii) in~\cite{systemsCubicForms}.

\begin{lemma}\label{3.lem:space_where_matrix_is_small}
	Let $M$ be a real $m\times n$ matrix. Let $k\in\bbN$ such that $k \leq\min\setbrax{ m,n }$ holds. For any $C\geq 1,$ either there is an $(n-k+1)$-dimensional linear subspace $X$ of $\bbR^n$ such that
	\begin{align*}
	\supnorm{M \vec{X}}
	&\leq
	C^{-1}\supnorm{\vec{X}}
	&\text{for all }\vec{X}\in X,
	\end{align*}
	or there is a $k$-dimensional linear subspace $V $ of $\bbR^n,$ spanned by standard basis vectors of $\bbR^n,$ such that
	\begin{align*}
	\supnorm{M \vec{v}}
	&\gg_{m,n}
	C^{-1}\supnorm{\vec{v}}
	&\text{for all }\vec{v}\in V.
	\end{align*}
\end{lemma}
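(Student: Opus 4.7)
The plan is a Gaussian elimination argument with full pivoting. Starting from $M^{(0)} = M$ and empty index sets $R_0\subseteq\set{1,\dotsc,m}$ and $C_0\subseteq\set{1,\dotsc,n}$, I iterate the following step. At step $i$, locate the entry of $M^{(i-1)}$ of largest absolute value $|p_i|$ as $(r,c)$ ranges over rows not in $R_0$ and columns not in $C_0$, say at position $(r_i,c_i)$. Fixing a threshold $c_{m,n}>0$ depending only on $m,n$ to be chosen later, if $|p_i|\geq c_{m,n}C^{-1}$ then adjoin $r_i$ to $R_0$ and $c_i$ to $C_0$, use $p_i$ to clear column $c_i$ in every other row, record $M^{(i)}=E_iM^{(i-1)}$, and continue; otherwise terminate.

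\emph{Case A: the process completes $k$ steps.} Then $|p_1|,\dotsc,|p_k|\geq c_{m,n}C^{-1}$, and the $R_0\times C_0$ submatrix of $M^{(k)}$ is the diagonal matrix with entries $p_1,\dotsc,p_k$ (later eliminations do not disturb previously cleared columns). Take $V=\operatorname{span}\set{e_j\suchthat j\in C_0}$. For $v=\sum_{j\in C_0}c_je_j\in V$, the subvector of $M^{(k)}v$ indexed by $R_0$ equals $\brax{p_ic_{c_i}}_i$, of sup norm at least $\min_i|p_i|\cdot\supnorm{v}\geq c_{m,n}C^{-1}\supnorm{v}$. Full pivoting guarantees that every elimination multiplier in each $E_i$ has absolute value at most $1$, which bounds the sup-norm-to-sup-norm operator norm of $E_k\dotsm E_1$ by a constant depending only on $m,n$. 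Hence $\supnorm{Mv}\gg_{m,n}\supnorm{M^{(k)}v}\gg_{m,n}C^{-1}\supnorm{v}$, yielding the second alternative.

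\emph{Case B: the process terminates early at some step $i_0\leq k$.} Then every entry of $M^{(i_0-1)}$ in the complementary block (rows outside $R_0$, columns outside $C_0$) has absolute value less than $c_{m,n}C^{-1}$, and after reordering $M^{(i_0-1)}$ has the block form
\[
M^{(i_0-1)}=\begin{pmatrix}B&G\\0&A\end{pmatrix},
\]
where $B$ is upper triangular with diagonal $p_1,\dotsc,p_{i_0-1}$ (hence invertible) and the entries of $A$ are of absolute value less than $c_{m,n}C^{-1}$. Parameterizing freely by the coordinates of $v$ outside $C_0$, take
\[
X=\setbrax{v\in\bbR^n\suchthat\brax{v_j}_{j\in C_0}=-B^{-1}G\brax{v_j}_{j\notin C_0}},
\]
a linear subspace of dimension $n-(i_0-1)\geq n-k+1$. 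By construction $M^{(i_0-1)}v$ vanishes on rows in $R_0$ and its remaining entries are bounded by $(n-i_0+1)c_{m,n}C^{-1}\supnorm{v}$. Since $Mv=(E_{i_0-1}\dotsm E_1)^{-1}M^{(i_0-1)}v$ and the inverse elimination matrices again have sup-norm-to-sup-norm operator norm bounded by a constant depending only on $m,n$, choosing $c_{m,n}$ small enough gives $\supnorm{Mv}\leq C^{-1}\supnorm{v}$; passing to any $(n-k+1)$-dimensional subspace of $X$ then yields the first alternative.

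The main obstacle is the uniform control of the sup-norm-to-sup-norm operator norms of the accumulated elimination matrices $E_k\dotsm E_1$ and $(E_{i_0-1}\dotsm E_1)^{-1}$; this is handled by the classical observation that full pivoting forces every elimination multiplier to have absolute value at most $1$, producing a growth factor bounded by a function of $m$ and $n$ alone. With that control in hand, the remaining work is routine bookkeeping in choosing the constant $c_{m,n}$.
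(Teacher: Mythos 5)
Your overall strategy (full-pivot elimination, record the pivots, take $V$ to be the span of the pivot columns in Case~A and a graph subspace over the non-pivot columns in Case~B) is sound and matches the spirit of the cited Lemma~3.2 in the cubic-forms paper; but there is a real gap in the step where you bound the elimination matrices.

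You clear column $c_i$ \emph{in every other row}, i.e.\ Gauss--Jordan rather than plain forward elimination. For rows $r\notin R_0$ the multiplier $M^{(i-1)}_{r,c_i}/p_i$ is indeed bounded by $1$, because $(r,c_i)$ lies in the active block where $p_i$ is the entry of maximal modulus. But for a previously used pivot row $r_j$ with $j<i$, the position $(r_j,c_i)$ lies \emph{outside} the active block at step $i$, so the pivot rule says nothing about $\abs{M^{(i-1)}_{r_j,c_i}}$, and the back-elimination multiplier $M^{(i-1)}_{r_j,c_i}/p_i$ can be arbitrarily large. A concrete $2\times 2$ example:
\[
M=\begin{pmatrix}1&1\\ \epsilon&2\epsilon\end{pmatrix},\qquad M^{(1)}=\begin{pmatrix}1&1\\ 0&\epsilon\end{pmatrix},
\]
so $p_1=1$, $p_2=\epsilon$, and clearing above the second pivot uses the multiplier $1/\epsilon$, which is unbounded as $\epsilon\to 0$. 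Consequently the assertion that $\supnorm{E_k\dotsm E_1}\ll_{m,n}1$ (and, in Case~B, that $\supnorm{(E_{i_0-1}\dotsm E_1)^{-1}}\ll_{m,n}1$) does not follow, and both cases as written rest on an inequality that can fail.

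The argument can be repaired by eliminating \emph{downward only}. Then all multipliers do have modulus $\leq 1$, each $E_i$ and $E_i^{-1}$ has sup-operator-norm $\leq 2$, and $\supnorm{E_k\dotsm E_1}\leq 2^k\ll_{m,n}1$. The price is that in Case~A the $R_0\times C_0$ block $T$ of $M^{(k)}$ is now only upper triangular, not diagonal. One still gets the desired lower bound: row $r_i$ of $T$ has diagonal entry $p_i$ and, by the full-pivot rule applied at step $i$, all its off-diagonal entries bounded in modulus by $\abs{p_i}$. Writing $T=D(I+N)$ with $D=\operatorname{diag}(p_1,\dotsc,p_k)$ and $N$ strictly upper triangular with entries of modulus $\leq 1$, the nilpotent Neumann series gives $\supnorm{(I+N)^{-1}}\ll_k 1$, so $\supnorm{T^{-1}}\ll_k (\min_i\abs{p_i})^{-1}$ and hence $\supnorm{Tw}\gg_k (\min_i\abs{p_i})\supnorm{w}\gg_{m,n}C^{-1}\supnorm{w}$ for $w$ supported on $C_0$. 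Case~B goes through unchanged with this modification (you already, perhaps inadvertently, describe $B$ there as upper triangular). With these adjustments the proof is correct.
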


\begin{proof}[Proof of Lemma~\ref{3.lem:Jacobian_of_grad_f}]
	Suppose that $\vec{\gamma}\in \bbR^R$ and $\vecsuper{z}{1},\dotsc,\vecsuper{z}{d-1}\in \bbR^n$ such that 
	\[
	\supnorm{\vec{\gamma}} =\supnorm{\vecsuper{z}{1}}=\dotsb=\supnorm{\vecsuper{z}{d-1}}= 1
	\]
	holds. We will show that either
	\begin{align}\nonumber
	\supnorm{\gradFMultilinear{\vec{\gamma}}{\vecsuper{z}{1},\dotsc,\vecsuper{z}{d-1}}}
	&\gg_{\vec{f}} 1,
	\intertext{or else}
	\label{3.eqn:many_normals_alternate_formulation}
	\supnorm{ \somethingDotFMultilinJacobian{\vec{\gamma}}{\vecsuper{z}{1},\dotsc,\vecsuper{z}{d-1}}  
		\vec{u}
	}
	&\gg_{\vec{f}}
	\supnorm{\vec{u}}
	&\text{for all }\vec{u}\in U,
	\end{align}
	for some $(n-\worstTangentSpaceDimLeadingPart)$-dimensional linear subspace $U$ of $\bbR^{(d-1)n}$ spanned by standard basis vectors of $(d-1)n$-dimensional space. Once we have shown this, the result will follow on writing
	\begin{align*}
	\vec{\gamma} &= \vec{\beta}/\supnorm{\vec{\beta}},
	&
	\vecsuper{z}{i} &= \vecsuper{x}{i} / \supnorm{\vecsuper{x}{i}},
	\\
	U &= U_1 \times\dotsb\times U_{d-1}
	&
	\vec{u}
	&=\left(
	\begin{array}{@{}c@{}}\vecsuper{u}{1}/\supnorm{\vecsuper{x}{1}}
	\bigstrut \\ \hline \raisebox{2pt}{\vdots} \\ \hline \vecsuper{u}{d-1}/\supnorm{\vecsuper{x}{d-1}}\bigstrut
	\end{array}\right).
	\end{align*}
	
	Let $C\geq 1$ and apply Lemma~\ref{3.lem:space_where_matrix_is_small} with the choices $k=n-\worstTangentSpaceDimLeadingPart$ and $M=\somethingDotFMultilinJacobian{\vec{\gamma}}{\vecsuper{z}{1},\dotsc,\vecsuper{z}{d-1}}  $. This shows that either
	\begin{align*}
	\supnorm{ \somethingDotFMultilinJacobian{\vec{\gamma}}{\vecsuper{z}{1},\dotsc,\vecsuper{z}{d-1}}  
		\vec{u}
	}
	&\geq C^{-1}
	\supnorm{\vec{u}}
	&\text{for all }\vec{u}\in U,
	\end{align*}
	for some  $(n-\worstTangentSpaceDimLeadingPart)$-dimensional linear subspace $U$ of $\bbR^{(d-1)n}$ spanned by standard basis vectors, or else there is a $(1+\worstTangentSpaceDimLeadingPart)$-dimensional linear subspace $X$ of $\bbR^{(d-1)n}$ such that
	\begin{align}\label{3.eqn:multlin_jacobian_small}
	\supnorm{ \somethingDotFMultilinJacobian{\vec{\gamma}}{\vecsuper{z}{1},\dotsc,\vecsuper{z}{d-1}}   \vec{X}}
	&\leq
	C^{-1}\supnorm{\vec{X}}
	&\text{for all }\vec{X}\in X.
	\end{align}
	
	Suppose for a contradiction that \eqref{3.eqn:many_normals_alternate_formulation} is false for every $U$ satisfying the required conditions. Then for each $C\geq 1$ there exist vectors $\vec{\gamma},\vecsuper{z}{1},\dotsc,\vecsuper{z}{d-1}$ with unit norm, and a space $X$ with dimension $1+\worstTangentSpaceDimLeadingPart$, satisfying \eqref{3.eqn:multlin_jacobian_small}. Passing to a convergent subsequence, we find vectors $\vec{\gamma},\vecsuper{z}{1},\dotsc,\vecsuper{z}{d-1}$ with unit norm and a space $X$ with dimension $(d-2)n+1+\worstTangentSpaceDimLeadingPart$, such that 
	\begin{align*}
	\somethingDotFMultilinJacobian{\vec{\gamma}}{\vecsuper{z}{1},\dotsc,\vecsuper{z}{d-1}}   \vec{X}
	&=\vec{0}
	&\text{for all }\vec{X}\in X.
	\end{align*}
	In other words, the matrix $\somethingDotFMultilinJacobian{\vec{\gamma}}{\vecsuper{z}{1},\dotsc,\vecsuper{z}{d-1}}$ has rank $n-\worstTangentSpaceDimLeadingPart-1$ or less. But this is impossible, by the definition \eqref{3.eqn:def_of_sigma-star} of the quantity $\sigma^\ast$. This proves the result.
\end{proof}

\subsection{Proof of Proposition~\ref{3.prop:aux_ineq_bound_general_pos}}\label{3.sec:aux_ineq_bound_proof}

We use Lemma~\ref{3.lem:Jacobian_of_grad_f} to bound the counting function $\auxIneqNumSolns(B)$ by covering the set of solutions to the auxiliary inequality \eqref{3.eqn:aux_ineq} with a collections of boxes of controlled size.

\begin{proof}[Proof of Proposition~\ref{3.prop:aux_ineq_bound_general_pos}]
	If $\vec{\beta}=\vec{0}$ then $\auxIneqNumSolns(B)=0$ and the result is trivial. Let $\vec{\beta}\in\bbR^R\setminus\set{\vec{0}}$. For each $T_1,\dotsc,T_{d-1} \geq 1$, define
	\begin{multline*}
	Z(T_1,\dotsc,T_{d-1})
	\\
	\eqdef
	\Big\{
	\brax{\vecsuper{x}{1},\dotsc,\vecsuper{x}{d-1}}\in\brax{\bbZ^n}^{d-1}
	\suchthat
	T_i
	\leq
	\supnorm{\vecsuper{x}{i}}
	\leq
	2T_i
	\quad\brax{  i = 1,\dotsc, d-1 }
	\\
	\supnorm{\gradFMultilinear{\vec{\beta}}{\vecsuper{x}{1},\dotsc,\vecsuper{x}{d-1}}}
	\leq
	\supnorm{\vec{\beta}}
	B^{d-2}
	\Big\},
	\end{multline*}
	so that
	\begin{equation}\label{3.eqn:aux_ineq_in_dyadic_blocks}
	\auxIneqNumSolns(B)
	\leq
	1+
	\sum_{\substack{t_1,\dotsc,t_{d-1}\in \bbN \\ t_i < \log_2 B}}
	\#Z(2^{t_1},\dotsc,2^{t_{d-1}}).
	\end{equation}
	Let $C_1$ be a positive real number which is  sufficiently large in terms of $\vec{f}$. The trivial bound $\#K(T_1,\dotsc,T_{d-1}) \ll_n T_1^n\dotsm T_{d-1}^n$ gives
	\[
	\sum_{\substack{t_1,\dotsc,t_{d-1}\in \bbN \\ t_1+\dotsb+t_{d-1} <  \log_2 C_1 B^{d-2} }}
	\#Z(2^{t_1},\dotsc,2^{t_{d-1}})
	\ll_{d,n}
	B^{(d-2)n} \brax{\log C_1 B}^{d-1},
	\]
	and substituting this into \eqref{3.eqn:aux_ineq_in_dyadic_blocks} gives
	\begin{multline}
	\auxIneqNumSolns(B)
	\ll_{d,n,C_1}
	B^{(d-2)n} \brax{\log B}^{d-1}
	\\
	+
	\sum_{\substack{t_1,\dotsc,t_{d-1}\in \bbN \\ t_i < \log_2 B \\ t_1+\dotsb+t_{d-1} \geq \log_2 C_1 B^{d-2} }}
	\#Z(2^{t_1},\dotsc,2^{t_{d-1}}).
	\label{3.eqn:aux_ineq_in_nontrivial_dyadic_blocks}
	\end{multline}
	For the remainder of the proof, we will let $T_1,\dotsc,T_{d-1}\in\open{0}{B}$ such that
	\begin{equation}\label{3.eqn:nontrivial_dyadic_range}
	T_1\dotsm T_{d-1}
	\geq
	C_1
	B^{d-2},
	\end{equation}
	and we will prove that
	\begin{equation}\label{3.eqn:aux_ineq_in_one_nontrivial_dyadic_block}
	\#Z(T_1,\dotsc,T_{d-1})
	\ll_{\vec{f}}
	B^{(d-2)n+R-1} \brax[\Big]{\frac{T_1\dotsm T_{d-1}}{B^{d-1}}}^{R-1}
	.
	\end{equation}
	Substituting \eqref{3.eqn:aux_ineq_in_one_nontrivial_dyadic_block} into \eqref{3.eqn:aux_ineq_in_nontrivial_dyadic_blocks} will then prove the proposition.
	
	We claim that for each $\brax{\vecsuper{x}{1},\dotsc,\vecsuper{x}{d-1}} \in Z(T_1,\dotsc,T_{d-1})$, there exist linear subspaces $U_1,\dotsc, U_{d-1}$ of $\bbR^n$,  spanned by standard basis vectors of $n$-space, such that $\dim U_1+ \dotsb+\dim U_{d-1}=n-\worstTangentSpaceDimLeadingPart$ and
	\begin{equation}\label{3.eqn:Jacobian_of_grad_f_with_dyadic_variables}
	\supnormbigg{ \betaFMultilinJacobian{\vecsuper{x}{1},\dotsc,\vecsuper{x}{d-1}}  
		\begin{smallpmatrix}\vecsuper{u}{1} \bigstrut \\ \hline \raisebox{5pt}{\scalebox{.75}{\vdots}} \\ \hline \vecsuper{u}{d-1} \end{smallpmatrix}
	} 
	\gg_{\vec{f}}
	\supnorm{\vec{\beta}} T_1 \dotsm T_{d-1}
	\max_{i=1,\dotsc,d-1}\frac{\supnorm{\vecsuper{u}{i}}}{T_i}
	\end{equation}
	for  all $\vecsuper{u}{i} \in U_i$. Indeed, if $\brax{\vecsuper{x}{1},\dotsc,\vecsuper{x}{d-1}} \in Z(T_1,\dotsc,T_{d-1})$, then
	\begin{align*}
	\supnorm{\gradFMultilinear{\vec{\beta}}{\vecsuper{x}{1},\dotsc,\vecsuper{x}{d-1}}}
	&\leq
	\supnorm{\vec{\beta}}B^{d-2},
	\intertext{and by \eqref{3.eqn:nontrivial_dyadic_range} it follows that}
	\supnorm{\gradFMultilinear{\vec{\beta}}{\vecsuper{x}{1},\dotsc,\vecsuper{x}{d-1}}}
	&\leq
	C_1^{-1} \supnorm{\vec{\beta}}
	T_1\dotsm T_{d-1}.
	\intertext{In particular,}
	\supnorm{\gradFMultilinear{\vec{\beta}}{\vecsuper{x}{1},\dotsc,\vecsuper{x}{d-1}}}
	&\leq
	C_1^{-1} \supnorm{\vec{\beta}}
	\supnorm{\vecsuper{x}{1}}\dotsm\supnorm{\vecsuper{x}{d-1}},
	\end{align*}
	and since we took $C_1\gg_{\vec{f}}1$ sufficiently large, we can apply Lemma~\ref{3.lem:Jacobian_of_grad_f} to give us spaces $U_i$ satisfying the required conditions.
	
	Fix some particular $U_i$, and fix  integral $n$-vectors  $\vecsuper{v}{1},\dotsc,\vecsuper{v}{d-1}$ satisfying  $T_i \leq \supnorm{\vecsuper{v}{i}}\leq 2T_i$ such that every $\vecsuper{v}{i}$ lies in the orthogonal complement of $U_i$. We then define $Z^{\ast}(T_1,\dotsc,T_{d-1})$ to be the subset of $Z(T_1,\dotsc,T_{d-1})$ containing those $(d-1)$-tuples $\brax{\vecsuper{x}{1},\dotsc,\vecsuper{x}{d-1}}$ which satisfy the bound \eqref{3.eqn:Jacobian_of_grad_f_with_dyadic_variables} for all $\vecsuper{u}{i} \in U_i$, and for which $\vecsuper{x}{i}-\vecsuper{v}{i}\in U_i$ for each $i$. We claim that
	\begin{equation}\label{3.eqn:aux_ineq_in_careful_region}
	\#
	Z^{\ast}(T_1,\dotsc,T_{d-1})
	\ll_{\vec{f}}
	\brax[\bigg]{\frac{B^{d-2}}{T_1\dotsm T_{d-1}}}^{n-\worstTangentSpaceDimLeadingPart}
	T_1^{\dim U_1}\dotsm T_{d-1}^{\dim U_{d-1}}.
	\end{equation}
	Every point in the set $Z(T_1,\dotsc,T_{d-1})$ lies in $Z^{\ast}(T_1,\dotsc,T_{d-1})$ for some choice of the parameters $U_i$ and $\vecsuper{v}{i}$. There are $O_{d,n}(1)$ choices for the spaces $U_i$, and for each one of these choices there are $O_{d,n}\brax{
		T_1^{n-\dim U_1}\dotsm T_{d-1}^{n-\dim U_{d-1}}}$  possibilities for the vectors $\vecsuper{v}{i}$, so by summing over all the possibilities we see that \eqref{3.eqn:aux_ineq_in_careful_region} implies
	\[
	\#Z(T_1,\dotsc,T_{d-1})
	\ll_{\vec{f}}
	\brax[\bigg]{\frac{B^{(d-2)}}{T_1\dotsm T_{d-1}}}^{n-\worstTangentSpaceDimLeadingPart}
	T_1^n\dotsm T_{d-1}^n
	\]
	which is the desired conclusion \eqref{3.eqn:aux_ineq_in_one_nontrivial_dyadic_block}.
	
	Let $\brax{\vecsuper{x}{1},\dotsc,\vecsuper{x}{d-1}}, \brax{\vecsuper{y}{1},\dotsc,\vecsuper{y}{d-1}} \in Z^{\ast}(T_1,\dotsc,T_{d-1})$ and for each $i$ write $\vecsuper{u}{i} = \vecsuper{y}{i}-\vecsuper{x}{i}$, so that $\vecsuper{u}{i}$ is an integral vector lying in $U_i$. We suppose that
	\begin{align}
	\supnorm{\vecsuper{u}{i}} &\leq C_1^{-1} T_i
	&\text{for all }i &= 1,\dotsc,d-1,
	\label{3.eqn:aux_ineq_solns_nearby}
	\intertext{
		where  $C_1$ is the sufficiently large constant from our assumption \eqref{3.eqn:nontrivial_dyadic_range}, and we will show that}
	\supnorm{\vecsuper{u}{i}} &\ll_{\vec{f}} \frac{B^{d-2}T_i}{T_1\dotsm T_{d-1}}
	&\text{for all }i &= 1,\dotsc,d-1.
	\label{3.eqn:repulsion_for_aux_ineq_solns}
	\end{align}
	From this it will follow that any box of the form
	\begin{multline*}
	A(\vec{\zeta})
	=
	\Big\{\brax{\vecsuper{\xi}{1},\dotsc,\vecsuper{\xi}{d-1}} \in (\bbR^n)^{d-1}
	\suchthat
	\text{for each }i = 1,\dotsc,d-1\text{ there are}
	\\
	\vecsuper{\nu}{i}\in U_i
	\text{ such that }
	\vecsuper{\xi}{i}
	=
	\vecsuper{\zeta}{i}+\vecsuper{\nu}{i} \text{ and }
	\supnorm{\vecsuper{\nu}{i}}\leq  C_1^{-1}T_i\Big\}
	\end{multline*}
	will satisfy 
	\[
	\#
	\setbrax[\big]{
		A(\vec{\zeta})\cup
		Z^{\ast}(T_1,\dotsc,T_{d-1})
	}
	\ll_{\vec{f}}
	\brax[\bigg]{\frac{B^{(d-2)}}{T_1\dotsm T_{d-1}}}^{n-R+1}
	T_1^{\dim U_1}\dotsm T_{d-1}^{\dim U_{d-1}}.
	\]
	We need at most $O_{\vec{f}}(1)$ such boxes to cover all of $Z(T_1,\dotsc,T_{d-1})$, so this implies our claim \eqref{3.eqn:aux_ineq_in_careful_region}.
	
	It remains to prove \eqref{3.eqn:repulsion_for_aux_ineq_solns}.   We have
	\[
	\gradFMultilinear{\vec{\beta}}{\vecsuper{y}{1},\dotsc,\vecsuper{y}{d-1}}
	=
	\gradFMultilinear{\vec{\beta}}{\vecsuper{x}{1}+\vecsuper{u}{1},\dotsc,\vecsuper{x}{d-1}+\vecsuper{u}{d-1}},
	\]
	and we will expand the right-hand side as a sum of terms of the type
	\begin{gather*}
	\gradFMultilinear{\vec{\beta}}{\vecsuper{u}{1},\vecsuper{x}{2},\dotsc,\vecsuper{x}{d-1}},\\
	\gradFMultilinear{\vec{\beta}}{\vecsuper{u}{1},\vecsuper{u}{2},\vecsuper{x}{3},\dotsc,\vecsuper{x}{d-1}},
	\end{gather*}
	and so on. That is, each term is equal to the system $\vecsuper{m}{\vec{\beta}\cdot\vec{f}}$ evaluated at a $(d-1)$-tuple of vectors, where we may take either $\vecsuper{x}{i}$ or $\vecsuper{u}{i}$ for the $i$th vector in the $(d-1)$-tuple. After grouping the terms together according to the number of vectors $\vecsuper{u}{i}$ occurring in the argument of $\vecsuper{m}{\vec{\beta}\cdot\vec{f}}$, this gives
	\begin{align}
	\MoveEqLeft[3]
	\gradFMultilinear{\vec{\beta}}{\vecsuper{y}{1},\dotsc,\vecsuper{y}{d-1}}
	\nonumber
	\\
	={}&
	\gradFMultilinear{\vec{\beta}}{\vecsuper{x}{1},\dotsc,\vecsuper{x}{d-1}}
	\nonumber
	\\
	&+
	\betaFMultilinJacobian{\vecsuper{x}{1},\dotsc,\vecsuper{x}{d-1}}  
	\begin{smallpmatrix}\vecsuper{u}{1} \bigstrut \\ \hline \raisebox{5pt}{\scalebox{.75}{\vdots}} \\ \hline \vecsuper{u}{d-1} \end{smallpmatrix}
	\nonumber
	\\
	&+
	O_{\vec{f}}\brax*{
		\supnorm{\vec{\beta}}
		\sum_{1\leq i_1<i_2 \leq d-1} \supnorm{\vecsuper{u}{i_1}} \cdot \supnorm{\vecsuper{u}{i_1}} \prod_{k\neq i_1,i_2} T_k}+\dotsb
	\nonumber
\\
&
	+
	O_{\vec{f}}\brax*{ 
		\supnorm{\vec{\beta}}\sum_{1\leq i_1< \dotsb < i_{d-2} \leq d-1} \supnorm{\vecsuper{u}{i_1}} \dotsm \supnorm{\vecsuper{u}{i_{d-2}}} \prod_{k\neq i_1,\dotsc,i_{d-2}} T_k}.
	\label{3.eqn:taylor_expanding_grad_f_I} 
	\end{align}
	By \eqref{3.eqn:aux_ineq_solns_nearby}, the total error in \eqref{3.eqn:taylor_expanding_grad_f_I} is
	\begin{equation*}
	O_{\vec{f}}\brax[\Big]{
		C_1^{-1}
		\supnorm{\vec{\beta}} T_1 \dotsm T_{d-1}
		\max_{i=1,\dotsc,d-1}\frac{\supnorm{\vecsuper{u}{i}}}{T_i}
	}.
	\end{equation*}
	In addition, as $\brax{\vecsuper{x}{1},\dotsc,\vecsuper{x}{d-1}}$ and $\brax{\vecsuper{y}{1},\dotsc,\vecsuper{y}{d-1}}$ belong to $Z(T_1,\dotsc,T_{d-1})$ we have
	\begin{equation*}
	\supnorm{\gradFMultilinear{\vec{\beta}}{\vecsuper{x}{1},\dotsc,\vecsuper{x}{d-1}}}, \,
	\supnorm{\gradFMultilinear{\vec{\beta}}{\vecsuper{y}{1},\dotsc,\vecsuper{y}{d-1}}}
	\leq \supnorm{\vec{\beta}} B^{d-2}.
	\end{equation*}
	So by \eqref{3.eqn:taylor_expanding_grad_f_I},
	\begin{multline*}
	\betaFMultilinJacobian{\vecsuper{x}{1},\dotsc,\vecsuper{x}{d-1}}  
	\begin{smallpmatrix}\vecsuper{u}{1} \bigstrut \\ \hline \raisebox{5pt}{\scalebox{.75}{\vdots}} \\ \hline \vecsuper{u}{d-1} \end{smallpmatrix}
	\\
	\ll_{\vec{f}}
	\supnorm{\vec{\beta}} B^{d-2}+
	C_1^{-1}
	\supnorm{\vec{\beta}} T_1 \dotsm T_{d-1}
	\max_{i=1,\dotsc,d-1}\frac{\supnorm{\vecsuper{u}{i}}}{T_i}.
	\end{multline*}
	By \eqref{3.eqn:Jacobian_of_grad_f_with_dyadic_variables}  this implies that
	\begin{multline*}
	\supnorm{\vec{\beta}} T_1 \dotsm T_{d-1}
	\max_{i=1,\dotsc,d-1}\frac{\supnorm{\vecsuper{u}{i}}}{T_i}
	\\
	\ll_{\vec{f}}
	\supnorm{\vec{\beta}} B^{d-2}+
	C_1^{-1}
	\supnorm{\vec{\beta}} T_1 \dotsm T_{d-1}
	\max_{i=1,\dotsc,d-1}\frac{\supnorm{\vecsuper{u}{i}}}{T_i}.
	\end{multline*}
	At the start of the proof we assumed that $\vec{\beta}\neq \vec{0}$ and that $C_1\gg_{\vec{f}}1 $, so this implies the conclusion \eqref{3.eqn:repulsion_for_aux_ineq_solns}.
\end{proof}

\section{Proof of Proposition~\ref{3.prop:general_position_condition}}\label{3.sec:general_position}

In this section we will prove Proposition~\ref{3.prop:general_position_condition}, bounding the quantity $\worstTangentSpaceDimCapitalH$ for typical systems $\vec{H}$. The strategy is to relate $\sigma^\ast$ to the dimension of a certain explicit complex variety $W$, which we will be able to parametrise.

\begin{proof}[Proof of Proposition~\ref{3.prop:general_position_condition}]
	If $\vec{H}$ is a system of degree $d$ forms with coefficients in a field $\bbF$, we define a subvariety $\Sigma_{\vec{H}}$ of $\bbP_{\bbF}^{R-1}\times (\bbP_{\bbF}^{n-1})^{d-1}$ as follows. Taking $\vec{\beta}$ and the $\vecsuper{x}{i}$ as homogeneous coordinates on $\bbP_{\bbF}^{R-1}$ and $\bbP_{\bbF}^{n-1}$ respectively, $\Sigma_{\vec{H}}$ is cut out by the conditions
	\begin{align*}
	\gradSomethingMultilinear{\vec{\beta}\cdot\vec{H}}{\vecsuper{x}{1},\dotsc,\vecsuper{x}{d-1}}&= \vec{0},\\
	\rank \betaDotHMultilinJacobian{\vecsuper{x}{1},\dotsc,\vecsuper{x}{d-1}} &\leq n-R.
	\end{align*}
	The condition that $\Sigma_{\vec{H}}$ be nonempty cuts out a Zariski closed subset, defined over \tbbQ, in the space of all systems $\vec{H}$. We will show that this is a proper subset. This will prove the proposition, because by \eqref{3.eqn:def_of_sigma-star} we have $\worstTangentSpaceDimCapitalH\geq R$ precisely when the variety $\Sigma_{\vec{H}}$ has an$\bar{\bbF}$-point.
	
	Suppose for a contradiction that $\Sigma_{\vec{H}}$ is nonempty for every system $\vec{H}$.
	
	Let $N(d,n)$ be the number of coefficients of a general form of degree $d$ in $n$ variables. The space $\bbP_\bbQ^{N(d,n)-1}$ parametrises degree $d$ forms in $n$ variables up to multiplication by a constant. Let $\Sigma_0$ be the subvariety of $\bbP_\bbQ^{N(d,n)-1}\times (\bbP_\bbQ^{n-1})^{d-1}$ defined by the two conditions
	\begin{equation}
	\label{3.eqn:bad_forms_I}
	\begin{aligned}
	\gradSomethingMultilinear{H}{\vecsuper{x}{1},\dotsc,\vecsuper{x}{d-1}}&= \vec{0},\\
	\rank \HMultilinJacobian{\vecsuper{x}{1},\dotsc,\vecsuper{x}{d-1}} &\leq n-R,
	\end{aligned}
	\end{equation}
	where  the form $H$ represents a point of $\bbP_\bbC^{N(d,n)-1}$. Given a system $\vec{H}$ with linearly independent $H_i$, we can embed the variety $\Sigma_{\vec{H}}$ into $\Sigma_0$ by sending the vector $\vec{\beta}$ to the form $\vec{\beta}\cdot\vec{H}$. The image of this embedding is $\Theta\cap\Sigma_0$, where $\Theta$ is the projective linear subspace of $\bbP_\bbQ^{N(d,n)-1}$ spanned by the  $H_i$. Since every variety  $\Sigma_{\vec{H}}$ is nonempty by assumption, the intersection $\Theta\cap\Sigma_0$ is nonempty for every $(R-1)$-dimensional projective linear space  $\Theta$ in $\bbP_\bbQ^{N(d,n)-1}$. So we have
	\begin{equation}\label{3.eqn:bad_forms_I_dim}
	\dim \Sigma_0
	\geq
	N(d,n)-R.
	\end{equation}
	Now let $\Sigma_1$ be the subvariety of $\bbP_\bbQ^{N(d,n)-1}\times(\bbP_\bbQ^{n-1})^d$ cut out by the conditions
	\begin{equation}\label{3.eqn:bad_forms_II}
	\begin{aligned}
	\gradSomethingMultilinear{H}{\vecsuper{x}{1},\dotsc,\vecsuper{x}{d-1}}&= \vec{0},\\
	\HMultilinJacobian{\vecsuper{x}{1},\dotsc,\vecsuper{x}{d-1}}^T \vecsuper{x}{d} &= \vec{0},
	\end{aligned}
	\end{equation}
	where $H$ represents a point of $\bbP_\bbQ^{N(d,n)-1}$ and $\vecsuper{x}{1},\dotsc,\vecsuper{x}{d}$ are vectors of homogeneous coordinates on $\bbP_\bbQ^{n-1}$. Each solution of \eqref{3.eqn:bad_forms_I} corresponds to an $R$-dimensional space of vectors $\vecsuper{x}{d}$ satisfying \eqref{3.eqn:bad_forms_II}. So each point of $\Sigma_0$ corresponds to an $(R-1)$-dimensional projective space of points on $\Sigma_1$, and by \eqref{3.eqn:bad_forms_I_dim} we must have
	\begin{equation}\label{3.eqn:lots_of_singular_points}
	\dim \Sigma_1
	\geq
	N(d,n)-1.
	\end{equation}	
	Consider the map
	\begin{equation}\label{3.eqn:def_of_linear_map_on_f}
	H
	\mapsto
	\left(
	\begin{array}{@{}c@{}}
	\HMultilinJacobian{\vecsuper{x}{1},\dotsc,\vecsuper{x}{d-1}}^T \vecsuper{x}{d}
	\bigstrut
	\\
	\hline
	\bigstrut
	\gradSomethingMultilinear{H}{\vecsuper{x}{1},\dotsc,\vecsuper{x}{d-1}}
	\end{array}
	\right),
	\end{equation}
	where the right-hand side is a vector with $2n$ entries obtained by concatenating two vectors with $n$ entries each. This map is linear in the coefficients of $H$. Let $L\brax{\vecsuper{x}{1},\dotsc,\vecsuper{x}{d}}$ be the matrix of this linear map, so that $L\brax{\vecsuper{x}{1},\dotsc,\vecsuper{x}{d}}$ is a $(dn)\times N(d,n)$ matrix whose entries are polynomials in the $\vecsuper{x}{i}$ with rational coefficients. Given a $d$-tuple  $\brax{\vecsuper{x}{1},\dotsc,\vecsuper{x}{d}}$, the space of forms $H$ satisfying \eqref{3.eqn:bad_forms_II} has dimension equal to
	\[
	N(d,n)-\rank L\brax{\vecsuper{x}{1},\dotsc,\vecsuper{x}{d}}.
	\]
	So if we  let $\Lambda(k)$ be the subvariety of $(\bbP_\bbQ^{n-1})^d$ cut out by the condition
	\begin{equation*}
	\rank L\brax{\vecsuper{x}{1},\dotsc,\vecsuper{x}{d}} = k,
	\end{equation*}
	then each point on $\Lambda(k)$ corresponds to a $(N(d,n)-k-1)$-dimensional projective linear space on $\Sigma_1$, and hence
	\begin{equation*}
	\dim \Sigma_1 = \max_{k\in \set{0,\dotsc,dn}}  \dim \Lambda(k)+(N(d,n)-k-1).
	\end{equation*}
	In particular, \eqref{3.eqn:lots_of_singular_points} implies that for some $k_0\in \set{0,\dotsc,dn}$ we have
	\begin{equation}
	\label{3.eqn:fibering_bad_forms_II}
	\dim \Lambda(k_0)\geq k_0.
	\end{equation}
	Let $W$ be the variety cut out in $(\bbP_\bbQ^{n-1})^d\times\bbP_\bbQ^{dn-1}$ by the equation
	\begin{equation}
	\vec{w}^T L\brax{\vecsuper{x}{1},\dotsc,\vecsuper{x}{d}}  = \vec{0}
	\label{3.eqn:def_of_variety_W},
	\end{equation}
	where $\vec{w}$ is a vector of homogeneous coordinates on $\bbP_\bbQ^{dn-1}$. If we are given a $d$-tuple $\brax{\vecsuper{x}{1},\dotsc,\vecsuper{x}{d}}$ representing a point on $\Lambda(k_0)$, then the space of vectors $\vec{w}$ satisfying \eqref{3.eqn:def_of_variety_W} has dimension
	\[
	dn-k_0.
	\]
	So each point on $\Lambda(k_0)$ corresponds to a $(dn-k_0-1)$-dimensional projective linear space on $W$, and \eqref{3.eqn:fibering_bad_forms_II} implies that
	\begin{equation}
	\label{3.eqn:points_which_are_singular_for_all_f}
	\dim W
	\geq
	dn-1.
	\end{equation}
	We will show that the complex points $W(\bbC)$ can be parametrised by $dn-2$ complex parameters. By standard results this implies that $
	\dim W
	\leq dn-2$, see the remarks at the end of \S2.3 in Chapter~2 of Shafarevich~\cite{shafarevichBasicAlgGeomI}.
	
	Let $ \brax{ \vecsuper{x}{1},\dotsc,\vecsuper{x}{d}, \vec{w} }$ represent a $\bbC$-point of $W$. From the definitions \eqref{3.eqn:def_of_linear_map_on_f} and \eqref{3.eqn:def_of_variety_W} we see that the expression
	\begin{align*}
	\vec{w}^T
	\left(
	\begin{array}{@{}c@{}}
	\HMultilinJacobian{\vecsuper{x}{1},\dotsc,\vecsuper{x}{d-1}}^T \vecsuper{x}{d}
	\bigstrut
	\\
	\hline
	\bigstrut
	\gradSomethingMultilinear{H}{\vecsuper{x}{1},\dotsc,\vecsuper{x}{d-1}}
	\end{array}
	\right)
	={}&
	(\vecsuper{x}{d})^T \HMultilinJacobian{\vecsuper{x}{1},\dotsc,\vecsuper{x}{d-1}}
	\begin{smallpmatrix}w_{1}  \\  \raisebox{4pt}{\scalebox{.75}{\vdots}} \\  w_{(d-1)n} \end{smallpmatrix}
	\\&+
	\gradSomethingMultilinear{H}{\vecsuper{x}{1},\dotsc,\vecsuper{x}{d-1}}^T
	\begin{smallpmatrix}w_{(d-1)n+1 } \\  \raisebox{4pt}{\scalebox{.75}{\vdots}} \\ w_{dn} \end{smallpmatrix}
	\end{align*}
	must vanish uniformly for all degree $d$ forms $H$. In the special case when $H(\vec{x}) = \brax{\vec{b}\cdot\vec{x}}^d$, so that the form $H$ is a $d$th power of a linear form, we calculate from the definition of $\HMultilinJacobian{\vecsuper{x}{1},\dotsc,\vecsuper{x}{d-1}}$ in \eqref{3.eqn:def_of_multilin_Jacobian} 	that this expression is
	\begin{align*}
	\brax{\vec{b}\cdot\vecsuper{w}{1}}
	\brax{\vec{b}\cdot\vecsuper{x}{2}}\dotsm 
	\brax{\vec{b}\cdot\vecsuper{x}{d}}
	&+
	\brax{\vec{b}\cdot\vecsuper{x}{1}}
	\brax{\vec{b}\cdot\vecsuper{w}{2}}
	\brax{\vec{b}\cdot\vecsuper{x}{3}}
	\dotsm 
	\brax{\vec{b}\cdot\vecsuper{x}{d}}
	\\
	&+
	\brax{\vec{b}\cdot\vecsuper{x}{1}}\dotsm 
	\brax{\vec{b}\cdot\vecsuper{x}{d-1}}
	\brax{\vec{b}\cdot\vecsuper{w}{d}},
	\end{align*}
	where we split $\vec{w}$ into $d$ separate $n$-vectors $\vecsuper{w}{i}$, given by
	\begin{equation}\label{3.eqn:def_of_w^i}
	\vec{w}
	=
	\begin{smallpmatrix}\vecsuper{w}{1} \bigstrut \\ \hline \raisebox{5pt}{\scalebox{.75}{\vdots}} \\ \hline \vecsuper{w}{d} \end{smallpmatrix}.
	\end{equation}
	We may divide through by $\brax{\vec{b}\cdot\vecsuper{x}{1}}
	\dotsm 
	\brax{\vec{b}\cdot\vecsuper{x}{d}}$ to see that
	\begin{equation}\label{3.eqn:thing_which_always_vanishes}
	\frac{\vec{b}\cdot\vecsuper{w}{1}}{\vec{b}\cdot\vecsuper{x}{1}}
	+\dotsb+
	\frac{\vec{b}\cdot\vecsuper{w}{d}}{\vec{b}\cdot\vecsuper{x}{d}}
	=0
	\end{equation}
	whenever $\vec{b}\in\bbC^n$ and all of the denominators $\vec{b}\cdot\vecsuper{x}{i} \neq 0$ are nonzero.

	Below we will find $m\in\set{1,\dotsc,d}$,  $\vec{k}\in \set{1,\dotsc,m}^d$,   $\vec{\lambda}\in\brax{\bbC\setminus\set{0}}^d$, $\vec{\mu}\in\bbC^m$ and $
	\vecsuper{y}{1},\dotsc,\vecsuper{y}{m}\in\bbC^n\setminus\set{\vec{0}}$ such that
	\begin{align}
	\sum_{k_j=\ell} \lambda_j &=1&(\ell=1,\dotsc,m),
	\label{3.eqn:restriction_on_coeffs_lambda}
	\\
	\mu_1+\dotsb+\mu_m &= 0,
	\label{3.eqn:restriction_on_coeffs_mu}
	\\
	\lambda_i\vecsuper{x}{i}
	&=  \vecsuper{y}{k_i}
	&(i=1,\dotsc,d),
	\label{3.eqn:restriction_on_x^i}
	\\
	\sum_{k_j=\ell} \lambda_i \vecsuper{w}{j} &= \mu_\ell\vecsuper{y}{\ell}
	&(\ell=1,\dotsc,m),
	\label{3.eqn:restriction_on_w^i}
	\end{align}
	where the $\vecsuper{w}{i}$ are as in \eqref{3.eqn:def_of_w^i}. Given $m$ and $ \vec{k}$, we have an $mn$-dimensional space of parameters $(\vecsuper{y}{1},\dotsc,\vecsuper{y}{m})$ and a $(d-1)$-dimensional space of parameters $(\vec{\lambda},\vec{\mu})$ satisfying \eqref{3.eqn:restriction_on_coeffs_lambda} and \eqref{3.eqn:restriction_on_coeffs_mu}. Having chosen the values of these parameters the value of each $\vecsuper{x}{i}$ is fixed uniquely by \eqref{3.eqn:restriction_on_x^i}, and there is a $(d-m)n$-dimensional space of vectors $\vec{w}$ satisfying \eqref{3.eqn:restriction_on_w^i}. In total then, the space of possible $(d+1)$-tuples $(\vecsuper{x}{1},\dotsc,\vecsuper{x}{d},\vec{w})$ has dimension at most
	\[
	mn+(d-1)+(d-m)n = dn+d-1.
	\]
	For any $u_1,\dotsc, u_d, v \in\bbC\setminus\set{\vec{0}}$, the $(d+1)$-tuple $ \brax{ u_1\vecsuper{x}{1},\dotsc,u_d\vecsuper{x}{d}, v\vec{w} }$ represents the same point of $W(\bbC)$ as $ \brax{ \vecsuper{x}{1},\dotsc,\vecsuper{x}{d}, \vec{w} }$. Consequently $ W(\bbC)$ can be parametrised with $dn-2$ complex parameters, and by the comments after \eqref{3.eqn:points_which_are_singular_for_all_f} this gives a contradiction and  proves the proposition.
	
	It remains to find, for each $(d+1)$-tuple $ \brax{ \vecsuper{x}{1},\dotsc,\vecsuper{x}{d}, \vec{w} }$ satisfying \eqref{3.eqn:def_of_variety_W}, a choice of the parameters $m, \vec{k}, \vec{\lambda}, \vec{mu}$ and $\vecsuper{y}{i}$ such that the relations \eqref{3.eqn:restriction_on_coeffs_lambda}--\eqref{3.eqn:restriction_on_w^i} hold. Define an equivalence relation on the set $\set{\vecsuper{x}{1},\dotsc,\vecsuper{x}{d}}$ by saying that  $\vecsuper{x}{i}$ and $\vecsuper{x}{j}$ are equivalent if they are linearly dependent. Let $m$ be the number of equivalence classes. Number them from 1 to $m$, and let $k_i$ be the number of the equivalence class to which $\vecsuper{x}{i}$ belongs. All the vectors in a given equivalence class are multiples of one fixed vector, so there are nonzero scalars $\lambda_1,\dotsc,\lambda_d$ and nonzero vectors $\vecsuper{y}{1},\dotsc,\vecsuper{y}{m}$  satisfying \eqref{3.eqn:restriction_on_x^i}. By replacing each $\vecsuper{y}{\ell}$ with a scalar multiple of itself if necessary, we may assume that \eqref{3.eqn:restriction_on_coeffs_lambda} holds. It remains to find $\vec{\mu}\in \bbC^m$ satisfying \eqref{3.eqn:restriction_on_coeffs_mu} and \eqref{3.eqn:restriction_on_w^i}.
	
	Substituting \eqref{3.eqn:restriction_on_x^i} into \eqref{3.eqn:thing_which_always_vanishes} shows that
	\begin{equation}\label{3.eqn:thing_which_always_vanishes_redux}
	\sum_{\ell=1}^m \frac{\vec{b}\cdot\sum_{k_i=\ell} \lambda_i \vecsuper{w}{i}}{\vec{b}\cdot\vecsuper{y}{\ell}} 
	=0
	\end{equation}
	whenever $\vec{b}\in\bbC^n$ and none of the denominators $\vec{b}\cdot\vecsuper{y}{\ell}$ vanish. Let $\ell_0 \in \set{1,\dotsc,m}$, let $\vec{t}\in \bbC^n$ and suppose that $\vec{t}\cdot\vecsuper{y}{\ell_0}=0$. Since the inequalities $m \leq d \leq n$ hold, there exist $\vec{u}, \vec{v}\in\bbC^n$ satisfying the three conditions
	\begin{align}\label{3.eqn:magic_choice_of_form_I}
	\vec{u}\cdot\vecsuper{y}{\ell}
	&
	\neq 0
	&\text{for all }\ell\neq \ell_0,
	\\
	\nonumber
	\vec{u}\cdot\vecsuper{y}{\ell_0}
	&
	= 0,
	&\text{ and}
	\\
	\label{3.eqn:magic_choice_of_form_III}
	\vec{v}\cdot\vecsuper{y}{\ell_0}
	&= 1.
	\end{align}
	For some small $\epsilon>0$ we set
	\[
	\vec{b}
	=
	\vec{t}+\epsilon \vec{u} + \epsilon^2 \vec{v}.
	\]
	Then the conditions \eqref{3.eqn:magic_choice_of_form_I} and \eqref{3.eqn:magic_choice_of_form_III} ensure that
	\begin{align*}
	\vec{b}\cdot\vecsuper{y}{\ell}
	&\gg
	\epsilon
	&\text{for all }\ell\neq \ell_0,\text{ and}
	\\
	\vec{b}\cdot\vecsuper{y}{\ell_0}
	&=
	\epsilon^2.
	\end{align*}
	So \eqref{3.eqn:thing_which_always_vanishes_redux} implies that
	\begin{equation*}
	\epsilon^{-2} \vec{t}\cdot \sum_{k_i=\ell_0}\lambda_i \vecsuper{w}{i}
	=O(\epsilon^{-1}).
	\end{equation*}
	Letting $\epsilon \to 0$ we see that $\vec{t}\cdot \sum_{k_i=\ell_0}\lambda_i \vecsuper{w}{i} = 0$ vanishes. Recall that this holds for any $\ell_0 \in \set{1,\dotsc,m}$ and any $\vec{t}\in \bbC^n$, provided only that $\vec{t}\cdot\vecsuper{y}{\ell_0}$ vanishes. So for each $\ell_0 \in \set{1,\dotsc,m}$ there must be some $\mu_{\ell_0}\in\bbC$ such that
	\[
	\sum_{k_i=\ell_0}\lambda_i \vecsuper{w}{i}
	=\mu_{\ell_0}\vecsuper{y}{\ell_0}.
	\]
	This gives us an $m$-vector $\vec{\mu}$ satisfying \eqref{3.eqn:restriction_on_w^i}. Finally, substituting \eqref{3.eqn:restriction_on_w^i} into \eqref{3.eqn:thing_which_always_vanishes_redux} shows that
	\[
	\sum_{\ell=1}^m \mu_\ell \frac{\vec{b}\cdot\vecsuper{y}{\ell}}{\vec{b}\cdot\vecsuper{y}{\ell}} = \mu_1+\dotsb+\mu_m =0,
	\]
	which proves \eqref{3.eqn:restriction_on_coeffs_mu}. So \eqref{3.eqn:restriction_on_coeffs_lambda}--\eqref{3.eqn:restriction_on_w^i} all hold, as required.
\end{proof}

\subsection*{Funding}
This work was supported by the Engineering and Physical Sciences Research Council [EP/J500495/1, EP/M507970/1]; and by the Fields Institute for Research in Mathematical Sciences.

\subsection*{Acknowledgements}
This paper is based on a DPhil thesis submitted to Oxford University. I would like to thank my DPhil supervisor, Roger Heath-Brown. I would like to thank Winston Heap for helpful comments.

\bibliography{systems-of-many-forms}

\end{document}